\theoremstyle{plain}
\newtheorem{thm}{Theorem}[section]
\newtheorem{lem}[thm]{Lemma}
\newtheorem{ex}[thm]{Example}
\numberwithin{equation}{section}
\begin{document}
\title[Determinant identities for Toeplitz-Hessenberg]{Determinant identities for Toeplitz-Hessenberg matrices with tribonacci number entries}
\author[T. Goy]{Taras Goy}
\author[M. Shattuck]{Mark Shattuck}

\begin{abstract}
In this paper, we evaluate determinants of some families of Toeplitz--Hessenberg matrices having tribonacci number entries. These determinant formulas may also be expressed equivalently as identities that involve sums of products of multinomial coefficients and tribonacci numbers. In particular, we establish a connection between the tribonacci  and the Fibonacci and Padovan sequences via Toeplitz--Hessenberg determinants.  We then obtain, by combinatorial arguments, extensions of our determinant formulas in terms of generalized tribonacci sequences satisfying a recurrence of the form $T_n^{(r)}=T_{n-1}^{(r)}+T_{n-2}^{(r)}+T_{n-r}^{(r)}$ for $n \geq r$, with the appropriate initial conditions, where $r \geq 3$ is arbitrary.
\end{abstract}
\subjclass[2010]{ Primary: 05A19; Secondary: 11B39, 15B05.}
\keywords{tribonacci numbers, Toeplitz-Hessenberg matrix, determinant, multinomial coefficient}

\date{\today}

\maketitle

\section{Introduction}

Let $\{F_n\}_{n\geq0}$ denote the \emph{Fibonacci sequence} (sequence A000045 in the OEIS \cite{Sloane}) given by $F_0 = 0$, $F_1 = 1$, and $F_{n} = F_{n-1} + F_{n-2}$ for $n \geq2$.  Some of the best known of the many analogues of the Fibonacci numbers include the \emph{tribonacci} and \textit{Padovan} numbers, denoted here by $T_n$ and $P_n$ (see entries A000073 and A000931 in \cite{Sloane}, respectively).
The tribonacci and Padovan sequences are defined respectively by the following recurrence relations for $n\geq3$:
\begin{align*}
T_{n}=T_{n-1}+T_{n-2}+T_{n-3},\qquad T_0=T_1=0,\, T_2 = 1,\\
P_{n}=P_{n-2}+P_{n-3},\qquad P_0=1,\, P_1=P_2=0.
\end{align*}
The tribonacci numbers represent the $k=3$ case of the Fibonacci $k$-step numbers \cite{Dresden} and are given equivalently by the explicit formula \cite{Barry}
$$T_{n}=
\sum_{i=0}^{\left\lfloor{n}/{2}\right\rfloor-1}\sum_{j=0}^i{i\choose j}\,{n-2-i-j \choose i}\,,\qquad n\geq2,$$
where $\lfloor\alpha\rfloor$ denotes the floor of $\alpha$.

Tribonacci numbers have a long history and have been extensively studied. First introduced by Agro\-nomof \cite{Agronomof} in 1914, the name \emph{tribonacci} being later coined by Feinberg \cite{Feinberg}, these numbers have been considered by many others since.  Examples of some recent work that involve tribonacci numbers and their various generalizations include \cite{Anan,Cereceda,Choi1,Choi2,Feng,Irmak,Kilic,Kuha,Yilmaz, Zatorsky}.
For instance, Choi \cite{Choi1} found a tribonacci triangle which is analogous to Pascal's and also investigated an efficient method of computing the $n$th tribonacci number using matrices.  Kili\c c \cite{Kilic} found some identities and generating matrices for the sequences $\{T_n\}$ and $\{T_{4n}\}$, along with their sums. Irmak and Alp \cite{Irmak}  later gave a recurrence relation for the tribonacci numbers with subscripts in arithmetic progression, $\{T_{rn+s}\}$ for $0\leq s<r$, and found sums of $\{T_{rn}\}$ for arbitrary $r$ via matrix methods. In \cite{Feng}, Feng  derived various recurrence relations for the tribonacci numbers and their sums and obtained some related identities by use of companion and generating matrices.  Kuhapatanakul and Sukruan \cite{Kuha} derived an explicit formula for the generalized tribonacci polynomials with negative subscripts possible.  In this paper, we find some new formulas for the tribonacci numbers which can be expressed equivalently in terms of certain determinant or multi-sum expressions.

The organization of this paper is as follows. The next section concerns preliminaries, while in the third, we find new relations involving the tribonacci sequence which arise as determinants of certain families of Toeplitz--Hessenberg matrices.  We remark that some of the results from this section were announced without proofs in \cite{Goy}.  Next, we state multi-sum versions of these relations involving products of multinomial coefficients and powers of tribonacci numbers.  In the final section, we extend by combinatorial arguments the preceding determinant formulas to generalized tribonacci sequences satisfying a recurrence of the form $T_n^{(r)}=T_{n-1}^{(r)}+T_{n-2}^{(r)}+T_{n-r}^{(r)}$ where $r \geq 3$.  It should be noted that while one of several possible extensions of the tribonacci number sequence might be considered, it is this one that seems to yield the most generalizations of the determinant formulas from the third section.

\section{Toeplitz--Hessenberg matrices and determinants}

A \emph{Toeplitz--Hessenberg matrix} is an $n\times n$ matrix having the form
\begin{equation}\label{THmatrix}
M_n(a_0;  a_1,\ldots,a_n)=\left[\begin{array}{ccccccc}
a_1        & a_0      & 0   &  \cdots       & 0      & 0 \\
a_2        & a_1       & a_0   & \cdots        & 0      & 0 \\
a_3        & a_2       & a_1      & \cdots        & 0      & 0 \\
\cdots     & \cdots    & \cdots & \ddots & \cdots & \cdots \\
a_{n-1}   & a_{n-2}  & a_{n-3} & \cdots  & a_1    & a_0 \\
a_{n}     & a_{n-1}& a_{n-2}   & \cdots & a_2    & a_1\\
\end{array}\right],
\end{equation}
where $a_0\ne 0$ and at least one $a_k\ne 0$ for $k>0$.  Toeplitz--Hessenberg matrices are often encountered in various applications of science and engineering (see, for example,  \cite{Merca} and references therein).

Repeated expansion along the first row yields the recurrence
\begin{equation}\label{Det-Hess}
\det(M_n)=\sum_{k=1}^{n}(-a_0)^{k-1}a_k\det(M_{n-k}), \qquad n \geq 1,
\end{equation}
where $\det(M_0)=1$, by convention.

The following result, which provides a multinomial expansion of  $\det(M_n)$, is known as Trudi's formula, the  $a_0 = 1$ case of which is called Brioschi's formula \cite{Muir}.
\begin{lem} Let $n$ be a positive integer. Then
\begin{equation} \label{Trudi}
\det(M_n)=\sum_{(s_1,\ldots,s_n)}(-a_0)^{n-(s_1+\cdots +s_n)}\,{s_1+\cdots+s_n\choose s_1,\ldots,s_n}\,a_1^{s_1}a_2^{s_2}\cdots a_{n}^{s_n},
\end{equation}
where the summation is over all integers $s_i\geq0$ satisfying
$s_1 +2s_2 +\cdots +ns_n = n$ and
$${s_1+\cdots+s_n\choose s_1,\ldots,s_n}=\frac{(s_1+\cdots+s_n)!}{s_1!\cdots s_n!}$$
denotes the multinomial coefficient.
\end{lem}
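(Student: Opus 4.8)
The plan is to induct on $n$, using the expansion recurrence \eqref{Det-Hess} together with formula \eqref{Trudi} assumed for all smaller sizes. For the base case $n=1$ we have $\det(M_1)=a_1$, and the only index vector satisfying $s_1=1$ is $(s_1)=(1)$, which contributes $(-a_0)^{1-1}\binom{1}{1}a_1=a_1$, as required.

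For the inductive step, I would substitute \eqref{Trudi} for $\det(M_{n-k})$ (with $0\le n-k<n$) into \eqref{Det-Hess}. Writing the index vectors for $M_{n-k}$ as $(t_1,\ldots,t_{n-k})$ subject to $\sum_i i\,t_i=n-k$, each summand of the recurrence becomes a product of $(-a_0)^{k-1}a_k$ with a term of \eqref{Trudi}. The key bookkeeping observation is that the monomial $a_k\prod_i a_i^{t_i}$ should be matched to the target monomial $a_1^{s_1}\cdots a_n^{s_n}$ via $s_i=t_i$ for $i\ne k$ and $s_k=t_k+1$; then $\sum_i i\,s_i=k+(n-k)=n$, so the size-$n$ constraint is met, and the two powers of $-a_0$ combine as $(-a_0)^{k-1}(-a_0)^{(n-k)-\sum_i t_i}=(-a_0)^{\,n-1-\sum_i t_i}=(-a_0)^{\,n-\sum_i s_i}$, which is exactly the exponent demanded by \eqref{Trudi}. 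Thus every power of $-a_0$ already agrees, and the whole problem reduces to matching the combinatorial coefficients.

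The one substantive step is therefore the following multinomial identity. Fixing a target vector $(s_1,\ldots,s_n)$ with $\sum_i i\,s_i=n$ and writing $S=s_1+\cdots+s_n$, collecting all contributions to $a_1^{s_1}\cdots a_n^{s_n}$ (one for each index $k$ with $s_k\ge1$, obtained by decrementing $s_k$) requires that
\begin{equation*}
\sum_{k\,:\,s_k\ge1}\binom{S-1}{s_1,\ldots,s_k-1,\ldots,s_n}=\binom{S}{s_1,\ldots,s_n}.
\end{equation*}
This is a direct generalization of Pascal's rule: since $\binom{S-1}{s_1,\ldots,s_k-1,\ldots,s_n}=\frac{s_k}{S}\binom{S}{s_1,\ldots,s_n}$, summing over $k$ produces the factor $\frac{1}{S}\sum_k s_k=1$, which returns the right-hand side. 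Verifying this identity---really the only nonroutine point---closes the induction.

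As an alternative route, I would note that \eqref{Det-Hess} is a convolution recurrence, so the generating function $D(x)=\sum_{n\ge0}\det(M_n)x^n$ satisfies $D(x)=\bigl(\sum_{k\ge0}a_k(-a_0x)^k/a_0\bigr)^{-1}$; expanding the reciprocal as a geometric series and applying the multinomial theorem to each power recovers \eqref{Trudi} directly, with the sign consolidation $(-1)^{n+S}a_0^{\,n-S}=(-a_0)^{\,n-S}$ handling the accounting. Either way, the multinomial coefficient is the crux, while the power-of-$a_0$ and sign bookkeeping is automatic.
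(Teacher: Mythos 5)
Your proposal is correct, but it cannot be "the same approach as the paper" for a simple reason: the paper offers no proof of this lemma at all. It is stated as the classical Trudi formula (with the $a_0=1$ case attributed to Brioschi) and justified only by a citation to Muir's \emph{Theory of Determinants}. Your induction therefore supplies something the paper deliberately outsources. The argument itself is sound: the base case is trivial; substituting the inductive hypothesis into the recurrence \eqref{Det-Hess} and decrementing $s_k$ sets up a correct bijection between pairs $(k,t)$ with $\sum_i i\,t_i=n-k$ and pairs $(s,k)$ with $\sum_i i\,s_i=n$, $s_k\ge 1$; the exponent bookkeeping $(-a_0)^{k-1}(-a_0)^{(n-k)-\sum_i t_i}=(-a_0)^{n-\sum_i s_i}$ is right; and the crux identity
\begin{equation*}
\sum_{k\,:\,s_k\ge1}\binom{S-1}{s_1,\ldots,s_k-1,\ldots,s_n}=\binom{S}{s_1,\ldots,s_n}
\end{equation*}
follows exactly as you say from $\binom{S-1}{s_1,\ldots,s_k-1,\ldots,s_n}=\frac{s_k}{S}\binom{S}{s_1,\ldots,s_n}$ (the $k=n$ term, which uses $\det(M_0)=1$, is absorbed correctly by the empty-vector convention). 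Your generating-function alternative is likewise valid, since \eqref{Det-Hess} is a convolution and $D(x)=\bigl(1-\sum_{k\ge1}(-a_0)^{k-1}a_kx^k\bigr)^{-1}$ expands by the geometric series and multinomial theorem into \eqref{Trudi}. What your route buys is self-containedness: the only inputs are the recurrence \eqref{Det-Hess}, which the paper has already derived by row expansion, and the multinomial Pascal rule; what the paper's citation buys is brevity and a pointer to the historical source. Either is acceptable; yours would make the preliminaries section independent of \cite{Muir}.
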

For example,
\begin{align*}
\det(M_4)&=(-a_0)^0\,{4\choose 4,0,0,0}\,a_1^4+(-a_0)^1\,{3\choose 2,1,0,0}\,a_1^2a_2+(-a_0)^2\,{2\choose 1,0,1,0}\,a_1a_3\\[0 pt]
&\quad+(-a_0)^2\,{2\choose 0,2,0,0}\,a_2^2+(-a_0)^3\,{1\choose 0,0,0,1}\,a_4\\[0 pt]
&=a_1^4-3a_0a_1^2a_2+2a_0^2a_1a_3+a_0^2a_2^2-a_0^3a_4.
\end{align*}
For brevity, we write $\det(\pm 1;a_1,a_2,\ldots,a_n)$ in place of $\det\left(M_n(\pm1;a_1,a_2,\ldots,a_n)\right)$.

\section{Tribonacci determinant formulas}

We find in this section determinant formulas for certain Toeplitz--Hessenberg matrices whose entries are various translates of the tribonacci sequence.  Our first result provides a connection between the tribonacci numbers and the Fibonacci and Padovan sequences.
\begin{thm}  \label{Theorem2} The following formulas hold:
\begin{align}
\label{tribFib}
\det(1;T_0,T_1,\ldots,T_{n-1})&=(-1)^{n-1}F_{n-2},\qquad n\geq 2,\\[0pt]
\label{tribPad}
\det(1;T_2,T_3,\ldots,T_{n+1})&=(-1)^{n-1}P_{n+2},\qquad n\geq 1.
\end{align}
\end{thm}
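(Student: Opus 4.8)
The plan is to prove both formulas at once through generating functions, exploiting the fact that the defining recurrence (\ref{Det-Hess}) for Toeplitz--Hessenberg determinants collapses to a simple rational expression once the entry sequence is encoded as a power series. Setting $a_0 = 1$ and multiplying (\ref{Det-Hess}) by $x^n$ and summing, one finds that $D(x) = \sum_{n \geq 0}\det(M_n)x^n$ satisfies $D(x) = 1 + g(x)D(x)$, where $g(x) = \sum_{k\geq1}(-1)^{k-1}a_k x^k$ records the signed entry sequence; hence $D(x) = 1/(1 - g(x))$. The whole problem therefore reduces to computing $g(x)$ from the tribonacci generating function $T(x) = \sum_{n\geq0}T_n x^n = x^2/(1 - x - x^2 - x^3)$ and simplifying.

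For (\ref{tribFib}), the entries are $a_k = T_{k-1}$, so after reindexing $g(x) = xT(-x) = x^3/(1 + x - x^2 + x^3)$; substituting into $D(x) = 1/(1-g(x))$, the cubic term cancels, leaving $D(x) = (1 + x - x^2 + x^3)/(1 + x - x^2) = 1 + x^3/(1 + x - x^2)$. For (\ref{tribPad}), the entries are $a_k = T_{k+1}$, and since $T_0 = T_1 = 0$ the reindexing gives $g(x) = T(-x)/x = x/(1+x-x^2+x^3)$, whence, after the analogous cancellation, $D(x) = (1+x-x^2+x^3)/(1 - x^2 + x^3) = 1 + x/(1 - x^2 + x^3)$.

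It then remains to recognize the right-hand sides. Starting from $\sum_{m\geq0}F_m y^m = y/(1-y-y^2)$, a shift-and-sign computation gives $\sum_{n\geq2}(-1)^{n-1}F_{n-2}x^n = x^3/(1+x-x^2)$, which together with the constant term $\det(M_0) = 1$ matches the expression for $D(x)$ found in the first case; comparing coefficients yields (\ref{tribFib}). Likewise, from $P(x) = (1-x^2)/(1-x^2-x^3)$ one computes $\sum_{n\geq1}(-1)^{n-1}P_{n+2}x^n = x/(1-x^2+x^3)$, matching the second expression for $D(x)$ and giving (\ref{tribPad}) upon comparing coefficients; here the convention $\det(M_0)=1$ supplies the constant term, which is precisely why the stated range begins at $n\geq1$ rather than $n \geq 0$.

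The main obstacle is the bookkeeping in the two reindexing-and-sign steps: one must pass correctly between the tribonacci series and $g(x)$ via the substitution $x \mapsto -x$ together with the appropriate power-of-$x$ shift (multiplying by $x$ for the $T_{k-1}$ entries, dividing by $x$ for the $T_{k+1}$ entries), and must verify that the cubic tribonacci term in the denominator of $g$ cancels against the numerator to leave the lower-degree Fibonacci and Padovan denominators. Alternatively, these cancellations can be recast as the three-term recurrences $\det(M_n) = -\det(M_{n-1}) + \det(M_{n-2})$ in the first case and $\det(M_n) = \det(M_{n-2}) - \det(M_{n-3})$ in the second, valid once $n$ exceeds the numerator degree; after checking the small base cases directly from (\ref{Det-Hess}), these reduce the claims to the Fibonacci and Padovan recurrences. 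I would present whichever route is cleaner, but expect the generating-function derivation to be the most transparent.
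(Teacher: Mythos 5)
Your proof is correct; all the computations check out. Setting $a_0=1$ in the recurrence \eqref{Det-Hess} does give $D(x)=1/(1-g(x))$ with $g(x)=\sum_{k\geq1}(-1)^{k-1}a_kx^k$, your reindexings $g(x)=xT(-x)=x^3/(1+x-x^2+x^3)$ and $g(x)=T(-x)/x=x/(1+x-x^2+x^3)$ are right (the latter using $T_0=T_1=0$), the cancellations leave $D(x)=1+x^3/(1+x-x^2)$ and $D(x)=1+x/(1-x^2+x^3)$ respectively, and these match $\sum_{n\geq2}(-1)^{n-1}F_{n-2}x^n$ and $\sum_{n\geq1}(-1)^{n-1}P_{n+2}x^n$ as you claim. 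This is, however, a genuinely different route from the paper's. The paper proves \eqref{tribFib} by induction on $n$: it expands $D_n$ via \eqref{Det-Hess}, splits the tribonacci entries using $T_{i-1}=T_{i-2}+T_{i-3}+T_{i-4}$, reindexes the resulting sums so that they telescope against the original expansion, and thereby extracts the three-term recurrence $D_n=-D_{n-1}+D_{n-2}$, after which the Fibonacci recurrence closes the induction (the Padovan case is stated to be similar and omitted). Your ``alternative route'' in the last paragraph---deriving $\det(M_n)=-\det(M_{n-1})+\det(M_{n-2})$ and $\det(M_n)=\det(M_{n-2})-\det(M_{n-3})$ and checking base cases---is essentially the paper's argument in disguise, since those recurrences are exactly what the denominators $1+x-x^2$ and $1-x^2+x^3$ encode. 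What your generating-function version buys is uniformity and transparency: both identities (including the case the paper omits) fall out of one rational-function computation, and the key cancellation of the cubic term is exhibited structurally rather than emerging from a delicate reindexing of alternating sums; what the paper's induction buys is that it is elementary and self-contained, requiring no prior knowledge of the Fibonacci, Padovan, or tribonacci generating functions.
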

\begin{proof} To prove formula \eqref{tribFib}, we induct on $n$. The proof of (\ref{tribPad}) which we omit is similar.

Let $D_n=\det(1;T_0,T_1,\ldots,T_{n-1})$. The $n = 2$ and $n = 3$ cases of formula \eqref{tribFib} are easily verified. Suppose \eqref{tribFib} holds for all $k\leq n-1$, where $n\geq4$.  By recurrence \eqref{Det-Hess},  we have
\begin{align*}
D_{n}& = \sum_{i=1}^n(-1)^{i-1}T_{i-1}D_{n-i}\\[0pt]
&=T_0D_{n-1}-T_1D_{n-2}+T_2D_{n-3} +\sum_{i=4}^n(-1)^{i-1}\left(T_{i-2}+T_{i-3}+T_{i-4}\right)D_{n-i}
\end{align*}
\begin{align*}
&=D_{n-3}-\sum_{i=4}^n(-1)^{i}T_{i-2}D_{n-i}-\sum_{i=4}^n(-1)^{i}T_{i-3}D_{n-i}-\sum_{i=4}^n(-1)^{i}T_{i-4}D_{n-i}\\[0pt]
&=D_{n-3}+\!\sum_{i=3}^{n-1}(-1)^{i}T_{i-1}D_{n-i-1}-\!\sum_{i=2}^{n-2}(-1)^{i}T_{i-1}D_{n-i-2}+\!\sum_{i=1}^{n-3}(-1)^{i}T_{i-1}D_{n-i-3}\\[0pt]
&=D_{n-3}+\sum_{i=1}^{n-1}(-1)^{i}T_{i-1}D_{n-i-1}-\sum_{i=1}^{n-2}(-1)^{i}T_{i-1}D_{n-i-2}-D_{n-3}=-D_{n-1}+D_{n-2}\\[0pt]
&=-(-1)^{n-2}F_{n-3}+(-1)^{n-1}F_{n-4}=(-1)^{n-1}F_{n-2}.
\end{align*}

Consequently, formula \eqref{tribFib} is true in the $n$ case. Therefore, by induction, the formula holds for all $n\geq 2$.
\end{proof}
	
The next theorem provides the value of $\det(\pm1;a_1,a_2,\ldots, a_n)$ for some special tribonacci entries $a_i$.
\begin{thm} \label{Theorem3} Let $n\geq1$, except where stated otherwise. Then
\begin{align}
\det(-1;T_0,T_1,\ldots,T_{n-1})&=\left\lfloor\frac{2^{n}+6}{14}\right\rfloor,\quad\nonumber\\
\det(-1;T_0,T_2,\ldots,T_{2n-2})&=\frac{17+\sqrt{17}}{34}\left(\frac32+\frac{\sqrt{17}}{2}\right)^{n-2}+\frac{17-\sqrt{17}}{34}\left(\frac32-\frac{\sqrt{17}}{2}\right)^{n-2},\quad n\geq2,\nonumber\\
\det(1;T_1,T_2,\ldots,T_n)&=(-1)^{n-1}\sum_{i=0}^{\left\lfloor (n-2)/3\right\rfloor}{n-2-2i\choose i}\,\label{Det1all}\\
\det(-1;T_1,T_2,\ldots,T_n)&=\sum_{i=0}^{\left\lfloor(2n-4)/3\right\rfloor}{2n-4-2i\choose i}\,,\qquad n\geq2,\nonumber\\
\det(1;T_1,T_3,\ldots,T_{2n-1})&=(-1)^{n-1}\left\lfloor 4\cdot 3^{n-3}\right\rfloor,\nonumber\\
\det(1;T_3,T_4,\ldots,T_{n+2})&=0,\qquad n\geq4\,\label{T3T4..}\\
\det(1;T_3,T_5,\ldots,T_{2n+1})&=(-2)^{n-1}\sum_{i=0}^{n-1}2^{-i-\left\lfloor {i}/{2}\right\rfloor}\,{n-1-i\choose \left\lfloor{i}/{2}\right\rfloor}\,,\nonumber\\
\det(1;T_4,T_5,\ldots,T_{n+3})&= \begin{cases}
{\displaystyle (-1)^{n}}, &\text{\emph{if $n\equiv 0$}}\text{ \emph{(mod
3)}};\\
{\displaystyle (-1)^{n+1}}, &\text{\emph{if $n\equiv 1$}}\text{ \emph{(mod
3)}};\\
{\displaystyle 0}, &\text{\emph{if $n\equiv 2$}} \text{\emph{ (mod 3)}},\end{cases}\qquad n\geq2,\nonumber\\
\det(1;T_4,T_6,\ldots,T_{2n+2})&=4\cdot(-1)^{n-1},\qquad n\geq3,\nonumber\\
\det(1;T_5,T_6,\ldots,T_{n+4})&=\sum_{i=0}^{\left\lfloor(n+1)/2\right\rfloor}{n+2+i\choose n+1-2i}\,,\nonumber\\
\det(1;T_5,T_7,\ldots,T_{2n+3})&=4,\qquad n\geq3.\nonumber
\end{align}
\end{thm}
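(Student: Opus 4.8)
The plan is to prove the identity $\det(1;T_5,T_7,\ldots,T_{2n+3})=4$ by induction on $n$, following the same strategy used above for formula \eqref{tribFib}: apply the expansion recurrence \eqref{Det-Hess}, replace the matrix entries using a suitable tribonacci recurrence, reindex the resulting sums, and recognize them as smaller determinants. The essential new ingredient is that the entries here are the \emph{odd-indexed} tribonacci numbers, for which the ordinary three-term recurrence $T_m=T_{m-1}+T_{m-2}+T_{m-3}$ is not directly applicable. Instead I would first record the bisection recurrence
\[
T_{2k+3}=3T_{2k+1}+T_{2k-1}+T_{2k-3},
\]
which holds because the squares $\alpha^2,\beta^2,\gamma^2$ of the roots of the characteristic polynomial $x^3-x^2-x-1$ are the roots of $y^3-3y^2-y-1$; equivalently, one verifies directly that $x^3-x^2-x-1$ divides $x^6-3x^4-x^2-1$. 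Writing $a_k=T_{2k+3}$ for the matrix entries (so that $a_0=T_3=1$ coincides with the Toeplitz parameter and $T_1=0$), this becomes $a_k=3a_{k-1}+a_{k-2}+a_{k-3}$ for $k\geq4$, together with the boundary instances $a_2=3a_1+1$ and $a_3=3a_2+a_1+1$.

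With $D_n=\det(1;T_5,T_7,\ldots,T_{2n+3})$, I would split off the first three terms of \eqref{Det-Hess},
\[
D_n=a_1D_{n-1}-a_2D_{n-2}+a_3D_{n-3}+\sum_{k=4}^n(-1)^{k-1}a_kD_{n-k},
\]
and substitute $a_k=3a_{k-1}+a_{k-2}+a_{k-3}$ into the remaining sum. Reindexing the three resulting sums by $j=k-1,\,k-2,\,k-3$ respectively turns each into a partial sum of the form $\sum_j(-1)^{j-1}a_jD_{m-j}$ with $m=n-1,n-2,n-3$, which by \eqref{Det-Hess} equals $D_m$ minus its leading one or two terms. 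Carrying this out yields
\[
D_n=a_1D_{n-1}-a_2D_{n-2}+a_3D_{n-3}-3\bigl(D_{n-1}-a_1D_{n-2}+a_2D_{n-3}\bigr)+\bigl(D_{n-2}-a_1D_{n-3}\bigr)-D_{n-3}
\]
for $n\geq4$.

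Collecting coefficients, the $D_{n-1}$ term has coefficient $a_1-3=1$, the $D_{n-2}$ term has coefficient $-a_2+3a_1+1=0$, and the $D_{n-3}$ term has coefficient $a_3-3a_2-a_1-1=0$, where the two vanishings are exactly the boundary instances of the bisection recurrence recorded above (numerically $a_1=4$, $a_2=13$, $a_3=44$). Hence $D_n=D_{n-1}$ for all $n\geq4$, and since the base case $D_3=4$ is checked directly from \eqref{Det-Hess}, we conclude $D_n=4$ for all $n\geq3$. I expect the main obstacle to be purely organizational: keeping the shifted index ranges and the subtracted boundary terms mutually consistent so that the lower-order coefficients cancel cleanly, and confirming that the reduction is valid already for $n\geq4$ rather than being forced to treat extra base cases.
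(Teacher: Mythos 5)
Your proposal is correct, and it takes a genuinely different route from the paper. For this identity the paper gives only a gesture in Section 3 (``the others may also be shown inductively,'' with details worked out only for $\det(1;T_1,\ldots,T_n)$, where the ordinary three-term recurrence suffices); its actual complete proof of $\det(1;T_5,T_7,\ldots,T_{2n+3})=4$ appears as the $r=3$ case of Theorem \ref{gth5}, via a sign-reversing involution on sequences of odd-length tribonacci tilings whose survivor set $U=\{(t,\ldots,t),(t,\ldots,t,ds),(sd,t,\ldots,t),(sd,t,\ldots,t,ds)\}$ has exactly four elements, each of positive sign, for $n\geq 3$. You instead supply the missing inductive ingredient explicitly: the bisection recurrence $T_{2k+3}=3T_{2k+1}+T_{2k-1}+T_{2k-3}$ (correctly justified, since $x^6-3x^4-x^2-1=(x^3-x^2-x-1)(x^3+x^2-x+1)$), after which the telescoping computation gives $D_n=D_{n-1}$ for $n\geq 4$ because the coefficients $-a_2+3a_1+1$ and $a_3-3a_2-a_1-1$ vanish; I checked the reindexing and the index ranges, and they are consistent for $n\geq4$, with $D_3=4$ verified directly. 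Note that your argument is even slightly cleaner than an induction: the relation $D_n=D_{n-1}$ follows from \eqref{Det-Hess} and the bisection recurrence alone, with no inductive hypothesis on the values of $D_m$ needed. What each approach buys: your algebraic proof is short, self-contained, and stays entirely within the framework of Section 3, but it depends on the special bisection recurrence of the tribonacci sequence and so does not extend readily; the paper's involution explains \emph{why} the answer is the constant $4$ (four surviving tilings of fixed sign) and generalizes uniformly to all odd $r\geq 3$, yielding the full statement of Theorem \ref{gth5}, of which this identity is only the smallest case.
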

\begin{proof}
To prove formula (\ref{Det1all}), we induct on $n$; the others may also be shown inductively. Let $D_n = \det(1;T_1,T_2,\ldots,T_n).$
When $1 \leq n \leq 3$, the formula is seen to hold. Assume that \eqref{Det1all} is true for all $k \leq n-1$ where $n\geq 4$, and we prove it in the $n$ case.  By (\ref{Det-Hess}), we have
\begin{align*}
D_{n}& = \sum_{i=1}^n(-1)^{i+1}T_iD_{n-i}\nonumber\\
&=T_1D_{n-1}-T_2D_{n-2} +\sum_{i=3}^n(-1)^{i+1}\left(T_{i-1}+T_{i-2}+T_{i-3}\right)D_{n-i}\nonumber\\
&=-D_{n-2} -\sum_{i=3}^n(-1)^{i}T_{i-1}D_{n-i}-\sum_{i=3}^n(-1)^{i}T_{i-2}D_{n-i}-\sum_{i=3}^n(-1)^{i}T_{i-3}D_{n-i}\nonumber\\
&=-D_{n-2}+\sum_{i=1}^{n-1}(-1)^{i}T_{i}D_{n-i-1}-\sum_{i=1}^{n-2}(-1)^{i}T_{i}D_{n-i-2}+\sum_{i=1}^{n-3}(-1)^{i}T_{i}D_{n-i-3}\nonumber\\
&=-D_{n-2}-D_{n-1}+D_{n-2}-D_{n-3}=-D_{n-1}-D_{n-3}\nonumber\\
	&=(-1)^{n-1}\sum_{i=0}^{\left\lfloor\frac{n-3}{3}\right\rfloor}{n-3-2i\choose i} + (-1)^{n-1}\sum_{i=0}^{\left\lfloor\frac{n-5}{3}\right\rfloor}{n-5-2i \choose i}.\nonumber
\end{align*}
Thus,
\begin{equation}\label{theo1}
D_n=(-1)^{n-1}\!\left(\sum_{i=0}^{\left\lfloor\frac{n-3}{3}\right\rfloor}{n-3-2i\choose i} + \sum_{i=1}^{\left\lfloor\frac{n-2}{3}\right\rfloor}{n-3-2i \choose i-1}\right).
\end{equation}

Let $n\ne 3\ell-1$ for some $\ell$. Then $\left\lfloor\frac{n-3}{3}\right\rfloor=\left\lfloor\frac{n-2}{3}\right\rfloor$. From (\ref{theo1}), using the well-known recurrence
\begin{equation}\label{binomial}
{n-1 \choose k}+{n-1\choose k-1}={n\choose k},
\end{equation}
we then have
	\begin{align}
D_n&=(-1)^{n-1}\left(\sum_{i=0}^{\left\lfloor\frac{n-2}{3}\right\rfloor}{n-3-2i\choose i} + \sum_{i=0}^{\left\lfloor\frac{n-2}{3}\right\rfloor}{n-3-2i \choose i-1}\right)\nonumber\\
&=(-1)^{n-1}\sum_{i=0}^{\left\lfloor\frac{n-2}{3}\right\rfloor}{n-2-2i\choose i}.\nonumber
	\end{align}	
If $n=3\ell-1$, then
$\left\lfloor\frac{n-3}{3}\right\rfloor=\left\lfloor\frac{n-2}{3}\right\rfloor-1$. In this case, using (\ref{binomial}), we have
	\begin{align}
	D_n&=(-1)^{n-1}\left(\sum_{i=0}^{\left\lfloor\frac{n-2}{3}\right\rfloor-1}{n-3-2i\choose i} + \sum_{i=1}^{\left\lfloor\frac{n-2}{3}\right\rfloor}{n-3-2i \choose i-1}\right)\nonumber\\
	&=(-1)^{n-1}\left(\sum_{i=0}^{\left\lfloor\frac{n-2}{3}\right\rfloor}{n-3-2i\choose i}+ \sum_{i=0}^{\left\lfloor\frac{n-2}{3}\right\rfloor}{n-3-2i \choose i-1}\right)\nonumber\\
	&=(-1)^{n-1}\sum_{i=0}^{\left\lfloor\frac{n-2}{3}\right\rfloor}{n-2-2i\choose i}.\nonumber
	\end{align}	
Formula \eqref{Det1all} now follows by induction on $n$.
\end{proof}

\section{Multinomial versions of Toeplitz--Hessenberg determinant formulas}

Theorems \ref{Theorem2} and \ref{Theorem3} may be rewritten in terms of Trudi's formula \eqref{Trudi} as follows.
\begin{thm} \label{Theorem4} Let $n\geq1$, except where stated otherwise. Then
\begin{align}
	\sum_{(s_1,\ldots,s_n)}(-1)^{\sigma_n}p_n(s)T_0^{s_1}T_1^{s_2}\cdots T_{n-1}^{s_n}&=-F_{n-2},\qquad n \geq 2,\nonumber\\[0pt]
	\sum_{(s_1,\ldots,s_n)}p_n(s)T_0^{s_1}T_1^{s_2}\cdots T_{n-1}^{s_n}&=\left\lfloor\frac{2^n+6}{14}\right\rfloor,\nonumber\\[0pt]
\sum_{(s_1,\ldots,s_n)}p_n(s)T_0^{s_1}T_2^{s_2}\cdots T_{2n-2}^{s_n}&=\frac{17+\sqrt{17}}{34}\left(\frac32+\frac{\sqrt{17}}{2}\right)^{n-2}\nonumber\\[0pt]
&~~~~+\frac{17-\sqrt{17}}{34}\left(\frac32-\frac{\sqrt{17}}{2}\right)^{n-2},\qquad n\geq2,\label{formula}\\[0pt]
	\sum_{(s_1,\ldots,s_n)}(-1)^{\sigma_n}p_n(s)T_1^{s_1}T_2^{s_2}\cdots T_n^{s_n}&=-
	\sum_{i=0}^{\left\lfloor(n-2)/3\right\rfloor}{n-2-2i\choose i}\,,\nonumber\\[0pt]
\sum_{(s_1,\ldots,s_n)}p_n(s)T_1^{s_1}T_2^{s_2}\cdots T_n^{s_n}& =\sum_{i=0}^{\left\lfloor(2n-4)/3\right\rfloor}{2n-4-2i\choose i}\,,\qquad n\geq2,\nonumber\\[0pt]
	\sum_{(s_1,\ldots,s_n)}(-1)^{\sigma_n}p_n(s)T_1^{s_1}T_3^{s_2}\cdots T_{2n-1}^{s_n}&=-\left\lfloor 4\cdot 3^{n-3}\right\rfloor,\nonumber\\[0pt]
\sum_{(s_1,\ldots,s_n)}(-1)^{\sigma_n}p_n(s)T_2^{s_1}T_3^{s_2}\cdots T_{n+1}^{s_n}&=-P_{n+2},\label{formula1}\\[0pt]
	\sum_{(s_1,\ldots,s_n)}(-1)^{\sigma_n}p_n(s)T_3^{s_1}T_4^{s_2}\cdots T_{n+2}^{s_n}&=0,\qquad n\geq4,\label{formula2}\\
		\sum_{(s_1,\ldots,s_n)}(-1)^{\sigma_n}p_n(s)T_3^{s_1}T_5^{s_2}\cdots T_{2n+1}^{s_n}&=
		-2^{n-1}\sum_{i=0}^{n-1}2^{-i-\left\lfloor {i}/{2}\right\rfloor}{n-1-i\choose \left\lfloor {i}/{2}\right\rfloor}\,, \nonumber\\
	\sum_{(s_1,\ldots,s_n)}(-1)^{\sigma_n}p_n(s)T_4^{s_1}T_5^{s_2}\cdots T_{n+3}^{s_n}&=
\begin{cases}
{\displaystyle 1}, &\text{\emph{if $n\equiv 0$}}\text{ \emph{(mod
3)}};\\
{\displaystyle -1}, &\text{\emph{if $n\equiv 1$}}\text{ \emph{(mod
3)}};\\
{\displaystyle 0}, &\text{\emph{if $n\equiv 2$}} \text{\emph{ (mod 3)}},\end{cases}\, \qquad n\geq2,\nonumber\\[0pt]
\sum_{(s_1,\ldots,s_n)}(-1)^{\sigma_n}p_n(s)T_4^{s_1}T_6^{s_2}\cdots T_{2n+2}^{s_n}&=
	-4,\qquad n\geq3,\nonumber\\[0pt]
	\sum_{(s_1,\ldots,s_n)}(-1)^{\sigma_n}p_n(s)T_5^{s_1}T_6^{s_2}\cdots T_{n+4}^{s_n}&=(-1)^n
	\sum_{i=0}^{\left\lfloor(n+1)/2\right\rfloor}{n+2+i\choose n+1-2i}\,,\nonumber\\[0pt]
\sum_{(s_1,\ldots,s_n)}(-1)^{\sigma_n}p_n(s)T_5^{s_1}T_7^{s_2}\cdots T_{2n+3}^{s_n}&=4\cdot(-1)^n,\qquad n\geq3,\nonumber
	\end{align}
where the summation is over all $n$-tuples of integers $s_i\geq0$ satisfying $s_1 +2s_2 +\cdots +ns_n = n$,
$p_n(s)={s_1+\cdots+s_n\choose s_1,\ldots,s_n}$, and
$\sigma_n=s_1+\cdots+s_n$.
\end{thm}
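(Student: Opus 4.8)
The plan is to derive each identity directly from the corresponding determinant evaluation in Theorems \ref{Theorem2} and \ref{Theorem3} by invoking Trudi's formula \eqref{Trudi}; no new combinatorics is required. Writing $\sigma_n = s_1 + \cdots + s_n$, formula \eqref{Trudi} applied with the $a_i$ chosen as the relevant tribonacci entries and the sum taken over the same index set $s_1 + 2s_2 + \cdots + ns_n = n$ reads $\det(M_n) = \sum_{(s_1,\ldots,s_n)}(-a_0)^{n-\sigma_n}p_n(s)a_1^{s_1}\cdots a_n^{s_n}$. Thus every multi-sum in Theorem \ref{Theorem4} equals, up to a sign depending only on $n$ and on the choice of $a_0$, a determinant already computed.

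First I would split into the two relevant specializations. For the identities carrying the factor $(-1)^{\sigma_n}$ the underlying determinant is taken with $a_0 = 1$, and since $(-a_0)^{n-\sigma_n} = (-1)^{n-\sigma_n} = (-1)^n(-1)^{\sigma_n}$, one obtains $\sum_{(s_1,\ldots,s_n)}(-1)^{\sigma_n}p_n(s)a_1^{s_1}\cdots a_n^{s_n} = (-1)^n\det(1;a_1,\ldots,a_n)$. For the two identities with no sign factor the determinant is taken with $a_0 = -1$, and since $(-a_0)^{n-\sigma_n} = 1$, one has $\sum_{(s_1,\ldots,s_n)}p_n(s)a_1^{s_1}\cdots a_n^{s_n} = \det(-1;a_1,\ldots,a_n)$.

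Next I would substitute the closed forms from Theorems \ref{Theorem2} and \ref{Theorem3} into these two relations and read off the stated right-hand sides. For example, \eqref{tribFib} gives $\det(1;T_0,\ldots,T_{n-1}) = (-1)^{n-1}F_{n-2}$, so the first multi-sum equals $(-1)^n(-1)^{n-1}F_{n-2} = -F_{n-2}$; similarly \eqref{Det1all} produces $-\sum_i\binom{n-2-2i}{i}$, the Padovan formula \eqref{tribPad} produces $-P_{n+2}$, and the vanishing formula \eqref{T3T4..} produces $0$. The two unsigned sums come directly from the $a_0 = -1$ evaluations of $\det(-1;T_0,T_1,\ldots,T_{n-1})$ and $\det(-1;T_1,\ldots,T_n)$.

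The only delicate point is the sign bookkeeping: in each line one must confirm that the factor $(-1)^n$ extracted from Trudi's formula combines with the sign already present in the determinant evaluation to yield exactly the sign displayed on the right. This is routine but should be verified case by case, most notably for the piecewise identity arising from $\det(1;T_4,\ldots,T_{n+3})$, where the residue of $n$ modulo $3$ must be tracked against the $(-1)^n$ factor.
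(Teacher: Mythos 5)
Your proposal is correct and is exactly the paper's approach: the paper offers no separate argument for Theorem \ref{Theorem4}, stating only that Theorems \ref{Theorem2} and \ref{Theorem3} ``may be rewritten in terms of Trudi's formula \eqref{Trudi},'' which is precisely the translation you carry out. Your sign bookkeeping, namely $\sum(-1)^{\sigma_n}p_n(s)a_1^{s_1}\cdots a_n^{s_n}=(-1)^n\det(1;a_1,\ldots,a_n)$ for the signed sums and $\sum p_n(s)a_1^{s_1}\cdots a_n^{s_n}=\det(-1;a_1,\ldots,a_n)$ for the unsigned ones, checks out in every line.
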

\begin{ex} It follows for example from formulas \eqref{formula}, \eqref{formula2}, and \eqref{formula1}, respectively,  that
\begin{align*}
T_2^3 +2T_2T_6 + T_4^2 + T_{10}& = 100,\\
T_3^4 -3T_3^2T_4 + 2T_3T_5 + T_4^2 -T_6& = 0,\\
T_2^5-4T_2^3T_3+3T_2^2T_4+3T_2T_3^2-2T_2T_5-2T_3T_4+T_6 = P_7&=1.
\end{align*}
\end{ex}

\section{Generalized identities}

In this section, we will generalize the foregoing identities by combinatorial arguments. See, for example, \cite{GSh} or \cite{GSh2} for comparable combinatorial proofs involving determinants of matrices with Catalan or Fibonacci number entries, respectively.   In our arguments, we will make frequent use of the definition of the determinant of an $n\times n$ matrix $A=(a_{i,j})$ given by
$$\det(A)=\sum_{\sigma \in \mathcal{S}_n}(-1)^{\text{sgn}(\sigma)}a_{1,\sigma(1)}a_{2,\sigma(2)}\cdots a_{n,\sigma(n)},$$
where $\text{sgn}(\sigma)$ denotes the sign of a permutation $\sigma$.
Assume that permutations are expressed such that within each cycle, the first element is the smallest, where cycles are arranged from left to right in ascending order of smallest elements.  Note that if $A$ is Toeplitz-Hessenberg, then only permutations $\sigma$ where the elements within each cycle are increasing and comprise an interval contribute to the determinant sum above.  Upon regarding the various cycle lengths as parts, such $\sigma$ are synonymous with compositions $\rho$ of $n$, where $n$ is the size of $A$.  Assume that the sign of $\rho$ is the same as that of the associated $\sigma$; i.e., let $\rho$ have sign $(-1)^{n-\nu(\rho)}$, where $\nu(\rho)$ denotes the number of parts of $\rho$.

Let each part $i$ of $\rho$ be assigned the weight $a_i$ for all $i \geq 1$.  Then, for a Toeplitz-Hessenberg matrix $A$ of size $n$ with superdiagonal entry $a_0=1$, one may view $\det(A)$ as a weighted (signed) sum over the set of compositions $\rho$ of $n$, where the weight of $\rho$  is defined to be the product of the weights of its parts and the sign is as stated above. That is, if $\rho=(x_1,\ldots,x_m)$ where $x_1+\cdots+x_m=n$ with each $x_i \geq 1$, then the sign of $\rho$ is $(-1)^{n-m}$ and the weight is $\prod_{i=1}^m a_{x_i}$. On the other hand, if $a_0=-1$, then each term in the expansion of $\det(A)$ is positive, as the sign of the associated composition $\rho$ is always the same as the sign derived from the product of the superdiagonal elements in this case.  Thus, $\det(-1;a_1,\ldots,a_n)$ may be viewed as a sum of positively weighted compositions, with the weight of each individual composition being the same as before.

Note that compositions of $n$ are synonymous with linear \emph{tilings} of length $n$ where parts are identified as tiles of various lengths (tiles of the same length are understood to be indistinguishable).  The tilings themselves may be regarded as coverings of the members of $[n]$, written consecutively in a row (see, e.g., \cite[Chapter~1]{BQ}).  Here, we will be considering various kinds of restricted tilings where only pieces of a certain length may be used. Given $n \geq 1$ and $r \geq 3$, let $\mathcal{T}_n^{(r)}$ denote the set of tilings of length $n$ that use only pieces of length $1$, $2$, or $r$, which will be denoted respectively by $s$ for square, $d$ for domino, and $r$ for $r$-mino (where an $r$-mino is a $1\times r$ piece capable of covering $r$ consecutive numbers).  The set $\mathcal{T}_0^{(r)}$ when $n=0$ is understood to consist of the empty tiling having length zero.

Members of $\mathcal{T}_n^{(r)}$ will be referred to as \emph{generalized $r$-tribonacci tilings}. Note that when $r=3$, one gets the usual tribonacci tilings (see \cite[Section~3.3]{BQ}) whose pieces are $s$, $d$, and $t$ ($t$ standing for tromino).  Tilings of length $n$ that use only $s$ and $d$ are often referred to as \emph{square-and-domino} tilings and have cardinality given by the Fibonacci number $F_{n+1}$.  In the combinatorial proofs below, we will impose various restrictions on the positions of the different types of tiles and/or permit certain tiles to be marked or colored.  This allows for generalizations of the tribonacci determinant identities above in terms of the parameter $r$.

Let $T_n^{(r)}$ for $r \geq 3$ be defined by the recurrence $T_n^{(r)}=T_{n-1}^{(r)}+T_{n-2}^{(r)}+T_{n-r}^{(r)}$ for $n \geq r$, with $T_0^{(r)}=\cdots=T_{r-2}^{(r)}=0$ and $T_{r-1}^{(r)}=1$.  Note that $|\mathcal{T}_{n}^{(r)}|=T_{n+r-1}^{(r)}$ for $n \geq 0$ and that $T_n^{(r)}$ reduces to $T_n$ when $r=3$.  One could then refer to the $T_n^{(r)}$ as \emph{generalized $r$-tribonacci numbers}.  Let $P_n^{(r)}$ for $r \geq 3$ satisfy $P_n^{(r)}=P_{n-2}^{(r)}+P_{n-r}^{(r)}$ if $n \geq r$, with $P_0^{(r)}=1$ and $P_1^{(r)}=\cdots=P_{r-1}^{(r)}=0$.  The $P_n^{(r)}$ coincide with the  Padovan numbers when $r=3$.  Let $a_n^{(r)}$ be defined recursively by $a_n^{(r)}=a_{n-1}^{(r)}+a_{n-r}^{(r)}$ for $n \geq r$, with $a_0^{(r)}=\cdots=a_{r-1}^{(r)}=1$.

The following identities for determinants involving generalized $r$-tribonacci numbers reduce respectively when $r=3$ to formulas \eqref{tribFib}, \eqref{Det1all}, \eqref{tribPad}, and \eqref{T3T4..} above.
\begin{thm}\label{gth1}
If $r \geq 3$, then
\begin{equation}\label{gth1e1}
\det(1;T_0^{(r)},T_1^{(r)},\ldots,T_{n-1}^{(r)})=(-1)^{n-1}F_{n-r+1}, \qquad n \geq r-1,
\end{equation}
\begin{equation}\label{gth1e2}
\det(1;T_{r-2}^{(r)},T_{r-1}^{(r)},\ldots,T_{n+r-3}^{(r)})=(-1)^{n-1}a_{n-2}^{(r)}, \qquad n \geq 2,
\end{equation}
\begin{equation}\label{gth1e3}
\det(1;T_{r-1}^{(r)},T_{r}^{(r)},\ldots,T_{n+r-2}^{(r)})=(-1)^{n-1}P_{n+r-1}^{(r)}, \qquad n \geq 1,
\end{equation}
\begin{equation}\label{gth1e4}
\det(1;T_{r}^{(r)},T_{r+1}^{(r)},\ldots,T_{n+r-1}^{(r)})=(-1)^{n-1}\delta_{n,r}, \qquad n \geq 3,
\end{equation}
where $\delta_{n,r}$ is the Kronecker delta.
\end{thm}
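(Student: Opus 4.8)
The plan is to prove all four identities at once by the tiling model for $\det(1;a_1,\ldots,a_n)$ set up just before the theorem, in which the determinant is read as a signed sum over compositions $\rho$ of $n$ (equivalently, over blockings of a tiling of $[n]$), where a part of length $i$ carries weight $a_i$ and $\rho$ carries sign $(-1)^{n-\nu(\rho)}$. First I would rewrite the matrix entries as tile counts using $|\mathcal{T}_m^{(r)}|=T_{m+r-1}^{(r)}$. This turns each entry $a_i$ into the number of tilings of a block of length $i$ whose first tile is forced to be of one fixed type $X$: an $r$-mino for \eqref{gth1e1}, so that $a_i=|\mathcal{T}_{i-r}^{(r)}|$; a domino for \eqref{gth1e2}, so $a_i=|\mathcal{T}_{i-2}^{(r)}|$; a square for \eqref{gth1e3}, so $a_i=|\mathcal{T}_{i-1}^{(r)}|$; and no restriction at all for \eqref{gth1e4}, so $a_i=|\mathcal{T}_i^{(r)}|$. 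Each determinant then becomes a signed sum over pairs $(\tau,C)$, where $\tau$ is a tiling of $[n]$ by squares, dominoes, and $r$-minoes whose first tile is of type $X$, and $C$ is an arbitrary set of the occurrences of $X$ other than the leading one (these mark the internal block boundaries), with the sign of $(\tau,C)$ equal to $(-1)^{n-|C|-1}$.

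The heart of the argument is a single sign-reversing involution. Fixing $\tau$, I would let $\pi$ be the leftmost occurrence of a tile of type $X$ other than the first tile and toggle the membership of $\pi$ in $C$; since every subset $C$ yields a legal blocking, this is a weight-preserving, sign-reversing involution on all pairs for which such a $\pi$ exists. Equivalently, performing the inner sum over $C$ first collapses it to $(1-1)^{k}$, where $k$ is the number of non-leading $X$-tiles of $\tau$, so only those $\tau$ whose sole $X$-tile is the leading one survive, each contributing $(-1)^{n-1}$. For \eqref{gth1e4} the role of $\pi$ is played by the leftmost internal tile boundary, and the survivors are exactly the single-tile tilings of $[n]$; since $r\geq3$ and $n\geq3$, such a tiling exists precisely when $n=r$, giving $(-1)^{n-1}\delta_{n,r}$.

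It then remains to count the survivors, namely the leading type-$X$ tile followed by a tiling of the remaining $n-\ell_X$ cells that avoids type $X$. For \eqref{gth1e1} this is a square-and-domino tiling of length $n-r$, counted by $F_{n-r+1}$; for \eqref{gth1e2} it is an $\{s,r\text{-mino}\}$-tiling of length $n-2$; and for \eqref{gth1e3} a $\{d,r\text{-mino}\}$-tiling of length $n-1$. I expect the main obstacle to be matching these last two counts with the sequences in the theorem. The $\{s,r\text{-mino}\}$ count $u_k$ satisfies $u_k=u_{k-1}+u_{k-r}$ with $u_0=\cdots=u_{r-1}=1$, which matches $a_{n-2}^{(r)}$ on the nose. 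The $\{d,r\text{-mino}\}$ count $Q_k$ instead requires the shift $Q_k=P_{k+r}^{(r)}$, which I would establish by the generating-function computation $\sum_k Q_k x^k=\frac{1}{1-x^2-x^r}=x^{-r}\bigl(\mathcal{P}(x)-1\bigr)$, where $\mathcal{P}(x)=\frac{1-x^2}{1-x^2-x^r}$ is the generating function of $P_n^{(r)}$; this yields $Q_{n-1}=P_{n+r-1}^{(r)}$, as needed.

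Finally I would verify the stated ranges by checking the boundary cases directly against this picture: $n=r-1$ in \eqref{gth1e1} (no tiling can begin with an $r$-mino, so the survivor set is empty and $F_{0}=0$), $n=2$ in \eqref{gth1e2}, and $n=1$ in \eqref{gth1e3} (where the survivor set is a single empty tiling), confirming that the involution and survivor count reproduce the right-hand sides throughout.
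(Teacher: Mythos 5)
Your proposal is correct and follows essentially the same route as the paper: both interpret the determinant as a signed sum over compositions realized as $r$-tribonacci tilings with distinguished tiles of a fixed type (your block-initial forced tile is just the mirror image of the paper's block-terminal marked tile), cancel terms via a sign-reversing involution that toggles the extremal non-distinguished occurrence of that type, and count the survivors as square-and-domino, $\{s,r\text{-mino}\}$-, $\{d,r\text{-mino}\}$-, and single-tile tilings, yielding $F_{n-r+1}$, $a_{n-2}^{(r)}$, $P_{n+r-1}^{(r)}$, and $\delta_{n,r}$ respectively. The only inessential difference is that you justify the identification of the $\{d,r\text{-mino}\}$-tiling count with $P_{n+r-1}^{(r)}$ by a generating-function computation, whereas the paper treats this as immediate from the defining recurrence.
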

\begin{proof}
Since \eqref{gth1e1} clearly holds for $n=r-1$, we may assume $n \geq r$.  Given $1 \leq k \leq n$, let $\mathcal{A}_n^{(k)}$ denote the set of all sequences of tilings $(\lambda_1,\ldots,\lambda_k)$ such that $\lambda_i\in\mathcal{T}_{\ell_i}^{(r)}$ where $\ell_1+\cdots+\ell_k=n-rk$ and $\ell_i \geq 0$ for all $i$.  Define the sign of a member of $\mathcal{A}_{n}^{(k)}$ to be $(-1)^{n-k}$ and let $\mathcal{A}_n=\cup_{k=1}^n\mathcal{A}_n^{(k)}$.  Since $T_0^{(r)}=\cdots=T_{r-2}^{(r)}=0$, all cycles in any permutation contributing a nonzero term in the expansion of $\det(1;T_0^{(r)},\ldots,T_{n-1}^{(r)})$ must have length at least $r$.  It is then seen that this expansion equals the sum of the signs of all members of $\mathcal{A}_n$.

For each $\lambda=(\lambda_1,\ldots,\lambda_k)\in \mathcal{A}_n^{(k)}$, we concatenate the $\lambda_i$ tilings (to form a single tiling of length $n-rk$), insert an $r$-mino between $\lambda_i$ and $\lambda_{i+1}$ for $i<k$ as well as after $\lambda_k$ and finally mark (indicated by underlining) each inserted $r$-mino to obtain
$\lambda'=\lambda_1\underline{r}\lambda_2\underline{r}\cdots \lambda_k\underline{r}$.  Doing so for all $\lambda \in \mathcal{A}_n$ implies that members of $\mathcal{A}_n$ may be identified as $r$-tribonacci tilings of length $n$ ending in an $r$-mino, where $r$-minos may be marked and the final $r$-mino is always marked, the set of which will be denoted by $\mathcal{A}_n'$.  Then $\det(1;T_0^{(r)},\ldots,T_{n-1}^{(r)})$ is seen to give the sum of the signs of all members of $\mathcal{A}_n'$, where the sign is given by  $(-1)^{n-(\#~\text{marked $r$-minos})}$.  We define a sign-changing involution of $\mathcal{A}_n'$ by identifying the rightmost non-terminal $r$-mino and either marking it or unmarking it, whichever applies.  This operation is defined on all members of $\mathcal{A}_n'$ except those containing a single $r$-mino, i.e., the terminal one.  Such members of $\mathcal{A}_n'$ then have sign $(-1)^{n-1}$ and their number is $F_{n-r+1}$ as they correspond to square-and-domino tilings of length $n-r$,  which implies formula \eqref{gth1e1}.

An analogous interpretation may be used to prove the determinant formula \eqref{gth1e2}. Here, let $\mathcal{B}_n$ denote the set of marked $r$-mino tilings of length $n$ in which dominos may be marked and whose final piece is a marked domino.  By similar reasoning as before, one has that $\det(1;T_{r-2}^{(r)},\ldots,T_{n+r-3}^{(r)})$ gives the sum of the signs of all members of $\mathcal{B}_n$, where the sign equals $(-1)^{n-(\#~\text{marked dominos})}$.  Define an involution on $\mathcal{B}_n$ by switching the rightmost non-terminal domino to the other option, if possible.  The survivors of this involution each have sign $(-1)^{n-1}$ and are synonymous with tilings of length $n-2$ containing only square and $r$-mino pieces.  Thus, they have cardinality $a_{n-2}^{(r)}=\sum_{i=0}^{\lfloor\frac{n-2}{r}\rfloor}\binom{n-2-(r-1)i}{i}$.

Similar reasoning applies to \eqref{gth1e3} and \eqref{gth1e4}, where instead of $\mathcal{B}_n$, one considers sets $\mathcal{C}_n$ and $\mathcal{D}_n$ that are defined the same way as $\mathcal{B}_n$ except that instead of marking dominos, one marks squares for $\mathcal{C}_n$ and any of the three kinds of pieces for $\mathcal{D}_n$.  One then defines the sign and the involution comparably as before in either case.  Note that in \eqref{gth1e3}, the $r$-Padovan number $P_{n+r-1}^{(r)}$ enumerates the survivors of the involution as they are tilings of length $n-1$ that use only dominos and $r$-minos. For \eqref{gth1e4}, note that if $n \geq r+1$ or $3 \leq n \leq r-1$, then each member of $\mathcal{D}_n$ must contain at least two pieces (and, in particular, a non-terminal piece).  Hence, there are no survivors of the involution in these cases, implying that the determinant is zero.  If $n=r$, then the sole survivor is the tiling consisting of a single (marked) $r$-mino, implying the determinant is $(-1)^{r-1}$ in this case.
\end{proof}

The determinant formulas in the following theorem involve those in which $a_0=1$ and $a_1=T_{r+1}^{(r)}$, with the subsequent $a_i$ subscripts increasing by either one or two. They extend the comparable identities above where $a_1=T_4$.
\begin{thm}\label{gth2}
If $r \geq 3$, then
\begin{equation}\label{gth2e1}
\det(1;T_{r+1}^{(r)},T_{r+2}^{(r)},\ldots,T_{n+r}^{(r)})=\sum_{i=0}^{\left\lfloor\frac{n}{r-1}\right\rfloor}(-1)^{ri}\binom{n-(r-2)i}{i}, \qquad n \geq 2,
\end{equation}
and \small
\begin{equation}\label{gth2e2}
\det(1;T_{r+1}^{(r)},T_{r+3}^{(r)},\ldots,T_{2n+r-1}^{(r)})=\begin{cases}
{\displaystyle 4\cdot(-1)^{n-1}, \quad n \geq r}, &\emph{if}~r~\emph{is~odd};\\
{\displaystyle (-1)^{n-1}\sum_{i=0}^{\left\lfloor\frac{2(n-1)}{r}\right\rfloor}\binom{n-1-\left(\frac{r}{2}-1\right)i}{i}, \quad n \geq 1}, &\emph{if}~r~\emph{is~even}.\end{cases}
\end{equation}\normalsize
\end{thm}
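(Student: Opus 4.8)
The plan is to evaluate each determinant combinatorially, using the composition/tiling interpretation set up at the start of this section. Since $|\mathcal{T}_m^{(r)}|=T_{m+r-1}^{(r)}$, for \eqref{gth2e1} one has $a_i=T_{r+i}^{(r)}=|\mathcal{T}_{i+1}^{(r)}|$, so that $\det(1;T_{r+1}^{(r)},\ldots,T_{n+r}^{(r)})$ may be viewed as a signed sum over all sequences of tilings $(\mu_1,\ldots,\mu_m)$, where each $\mu_j\in\mathcal{T}_{x_j+1}^{(r)}$ has length at least two and $x_1+\cdots+x_m=n$, the sign being $(-1)^{n-m}$. Regarding each $\mu_j$ as a block, this is a collection of blocks of total length $n+m$, each of length at least two, with sign $(-1)^{n-m}$; equivalently, one assigns weight $-1$ to each square, $+1$ to each domino, and $(-1)^r$ to each $r$-mino, since $(-1)^{\text{length}}=(-1)^{n+m}=(-1)^{n-m}$. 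For \eqref{gth2e2}, the same reasoning with $a_i=T_{2i+r-1}^{(r)}=|\mathcal{T}_{2i}^{(r)}|$ produces a signed sum over sequences of blocks of \emph{even} length whose total length is now fixed at $2n$, again with sign $(-1)^{n-m}$.

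For \eqref{gth2e1}, I would show that the blocks cancel in pairs except for those sequences in which every block is a single domino or a single $r$-mino. Such survivors correspond bijectively to square-and-$(r-1)$-mino tilings of length $n$ (a single domino, of length two, mapping to a square and a single $r$-mino to an $(r-1)$-mino), and a short computation with the sign $(-1)^{n-m}$ shows that a survivor with $i$ single $r$-minos carries sign $(-1)^{ri}$. Since the number of square-and-$(r-1)$-mino tilings of length $n$ having $i$ pieces of length $r-1$ equals $\binom{n-(r-2)i}{i}$, summing the signed survivors yields exactly the right-hand side of \eqref{gth2e1}. The cancellation is to be effected by a sign-reversing involution that locates a canonically chosen square (say the rightmost one), deletes it, and simultaneously merges or splits the adjacent block so that the number of blocks changes by one while the quantity $(\text{total length})-(\text{number of blocks})=n$ is preserved; blocks that are single dominoes or single $r$-minos contain no square and admit no such move, so they are precisely the fixed points. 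The restriction $n\geq 2$ guarantees that no stray non-survivor is left uncancelled, as happens at $n=1$.

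For \eqref{gth2e2}, the decisive feature is that the blocks now have even length and the total length is fixed at $2n$, so the involution need not insert or delete cells: it may simply merge or split adjacent blocks (toggling a canonically chosen block divider), which changes the number of blocks by one and reverses the sign while automatically keeping all blocks of even length. Here the parity of $r$ enters. When $r$ is even, a single $r$-mino is a legitimate even-length block, and the surviving sequences—those admitting no such toggle—again correspond to tilings by single dominoes and single $r$-minos, now matched with square-and-$(r/2)$-mino tilings of length $n-1$, producing the even case of \eqref{gth2e2}. When $r$ is odd, an $r$-mino has odd length and can never by itself fill an even block; a parity analysis of which even blocks are irreducible then shows that essentially every configuration is cancellable, leaving only a bounded family of short surviving configurations near the ends of the tiling whose signed total collapses to $4\cdot(-1)^{n-1}$ once $n\geq r$.

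The main obstacle in both parts is the precise construction of the involution: one must choose the canonical square/divider so that the operation is genuinely reversible and fixed-point-free off the claimed survivors, and one must verify that the sign is reversed in every case. This is most delicate for \eqref{gth2e2} with $r$ odd, where the even-length constraint forces heavy cancellation and the surviving configurations contributing the constant $4\cdot(-1)^{n-1}$ must be enumerated exactly; the cell-deletion step in \eqref{gth2e1}, which must keep $(\text{length})-(\text{number of blocks})$ fixed, requires similar care to remain well defined and invertible.
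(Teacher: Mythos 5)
Your treatment of \eqref{gth2e1} follows the same route as the paper's: the determinant is read as a signed sum over sequences of blocks (tilings of length at least $2$) of total length $n+m$ with sign $(-1)^{n-m}$, the survivors are the sequences whose blocks are all single dominoes or single $r$-minoes, and your sign $(-1)^{ri}$ and count $\binom{n-(r-2)i}{i}$ agree exactly with the paper's analysis of its set $\mathcal{E}_n^*$. The gap is in the involution, and it is a real one, not just a matter of polish. As described (``delete the rightmost square and merge''), the map is many-to-one: for $n=3$ both $(ds,d)$ and $(d,sd)$ collapse to $(dd)$, so no inverse insertion rule can be consistently defined from your description. Moreover, your fixed-point claim --- that blocks which are single dominoes or single $r$-minoes ``contain no square and admit no such move, so they are precisely the fixed points'' --- is unjustified as reasoned: the square-free single blocks $dd$, $dr$, $rr$, etc.\ also admit no deletion move, yet they must cancel. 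The paper's proof resolves exactly these two problems with explicit case rules: a final block $\alpha s$ ($|\alpha|\geq 2$) is exchanged with the pair $(\alpha,ss)$, and otherwise the \emph{rightmost} block of the form $\beta d$, $\beta r$, or $\beta s$ ($|\beta|\geq 1$) is toggled via $\beta d \leftrightarrow (\beta s,d)$ and $\beta r \leftrightarrow (\beta s,r)$; it is the insertion direction of this toggle that cancels the square-free non-survivors (e.g.\ $(dd)\leftrightarrow(ds,d)$, while $(d,sd)$ is instead paired with $(d,ss,d)$). So for \eqref{gth2e1} you have the right skeleton, but the step you defer is where the actual content lies.

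For \eqref{gth2e2} your setup (blocks of even length, total length $2n$, sign $(-1)^{n-m}$, involution by merging/splitting at dividers) is equivalent to the paper's set $\mathcal{F}_n$ of marked tilings, and the correct toggle --- at the rightmost \emph{non-terminal} tile boundary in even position --- is precisely the paper's involution. But your identification of the survivors in the $r$ even case is wrong. Under any merge/split involution of this kind, every configuration with two or more blocks cancels, because the divider between the last two blocks sits at a non-terminal even tile boundary and can be merged; in particular your claimed survivors, the sequences of singleton dominoes and singleton $r$-minoes, are not fixed points at all. They also could not produce the right-hand side: halving them gives square-and-$(r/2)$-mino tilings of length $n$, not $n-1$, with signs $(-1)^{(r/2-1)i}$ rather than a uniform $(-1)^{n-1}$; for $r=4$, $n=3$ that signed sum is $-1$, whereas the determinant equals $2$. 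The true fixed points are the \emph{single} blocks of length $2n$ in which no non-terminal tile ends at an even position: for $r$ even these are the tilings $s\alpha s$ with $\alpha$ a square-free tiling of length $2n-2$ (whence the $n-1$ after halving), and for $r$ odd they are the four tilings $sd^{n-1}s$, $rd^{n-(r+1)/2}s$, $sd^{n-(r+1)/2}r$, $rd^{n-r}r$, each of sign $(-1)^{n-1}$ --- one uniform survivor analysis covering both parities, in place of your ad hoc ``short configurations near the ends'' argument for $r$ odd. (Carrying out this analysis carefully also reveals that for $r$ even the stated range $n\geq 1$ is too generous: at $n=1$ the single domino, and at $2n=r$ the single $r$-mino, are additional survivors, so the even-case formula as written should be restricted to $n>r/2$ with $n \geq 2$.)
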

\begin{proof}
Given $1 \leq k \leq n$, let $\mathcal{E}_n^{(k)}$ denote the set of sequences of tilings $\lambda=(\lambda_1,\ldots,\lambda_k)$ such that $\lambda_j$ is an $r$-tribonacci tiling of length $\ell_j$, where $\ell_1+\cdots+\ell_k=n+k$ and $\ell _j \geq 2$ for all $j$, and let $\mathcal{E}_n=\cup_{k=1}^n\mathcal{E}_n^{(k)}$.  Define the sign of $\lambda \in \mathcal{E}_n^{(k)}$ by $(-1)^{n-k}$.  Then the left side of \eqref{gth2e1} gives the sum of the signs of all members of $\mathcal{E}_n$.  Let $\mathcal{E}_n^* \subseteq \mathcal{E}_n$ consist of those sequences $\lambda=(\lambda_1,\lambda_2,\ldots)$ such that $\lambda_j=d\text{ or } r$ for all $j$ (that is, each $\lambda_j$ is the tiling consisting of a single domino or $r$-mino piece).  Note that the sum of the signs of all members of $\mathcal{E}_n^*$ is given by the right-hand side of \eqref{gth2e1}, upon considering the number of times $r$ is chosen.  For if $r$ is chosen exactly $i$ times as a tiling within $\lambda$, then we must have $n-(r-1)i$ $d$'s in $\lambda$ and hence $k=n-(r-2)i$. This implies that there are $\binom{n-(r-2)i}{i}$ such members of $\mathcal{E}_n^*$, each of sign $(-1)^{n-k}=(-1)^{(r-2)i}=(-1)^{ri}$.

To complete the proof of \eqref{gth2e1}, we define a sign-changing involution of $\mathcal{E}_n-\mathcal{E}_n^*$.  Consider first the pairing
$$\lambda=(\lambda_1,\ldots,\lambda_k=\alpha s) \leftrightarrow \lambda'=(\lambda_1,\ldots,\lambda_{k-1},\lambda_k'=\alpha,\lambda_{k+1}'=ss),$$
where $|\alpha|\geq 2$, if the final tiling ends in $s$.  So assume that the final tiling of $\lambda=(\lambda_1,\lambda_2,\ldots)\in \mathcal{E}_n-\mathcal{E}_n^*$ does not end in $s$. Let $j_o$ be the largest index $j$ such that $\lambda_j$ is either of the form (i) $\lambda_j=\beta d \text{ or } \beta r$ or (ii) $\lambda_j=\beta s$, where $|\beta|\geq 1$.  Note that by the assumptions on $\lambda$, such an index $j_o$ always exists and that if (ii) holds, then $\lambda_{j_o}$ is not the final tiling.  Consider the replacement of $\lambda_{j_o}=\beta d \text{ or } \beta r$ with $\lambda_{j_o}=\beta s$, $\lambda_{j_o+1}=d \text{ or } r$ if (i) holds, and vice versa, if (ii).  This operation yields the desired involution of $\mathcal{E}_n-\mathcal{E}_n^*$, as one may verify.

For \eqref{gth2e2}, first let $\mathcal{F}_n$ denote the set of marked $r$-tribonacci tilings of length $2n$ wherein any tile (including a square) whose rightmost section corresponds to an even-numbered position may be marked, with the final tile always marked.  Define the sign as $(-1)^{n-(\text{$\#$ marked tiles})}$.  By similar reasoning as before, the determinant in \eqref{gth2e2} gives the sum of the signs of all members of $\mathcal{F}_n$.  We define an involution on $\mathcal{F}_n$ by identifying the rightmost non-terminal tile that ends on an even-numbered position and either marking or unmarking it.  If $r$ is odd, then the set $S$ of survivors for $n \geq r$ is given by
$$S=\{sd^{n-1}s,rd^{n-\left(\frac{r+1}{2}\right)}s,sd^{n-\left(\frac{r+1}{2}\right)}r,rd^{n-r}r\}.$$
Since only the terminal piece can be marked, each member of $S$ has sign $(-1)^{n-1}$, which implies the odd case of \eqref{gth2e2}.  If $r$ is even, then
$$S=\{s\alpha s: \alpha \in \mathcal{T}_{2n-2}^{(r)} \text{ and contains no squares}\}.$$
Considering the number $i$ of $r$-minos within $\alpha$, and then halving the size of each tile, implies the even case of \eqref{gth2e2} and completes the proof.
\end{proof}

\noindent \emph{Remark:}  The preceding involution shows further in the odd case of $r$ that
\begin{equation*}
\det(1;T_{r+1}^{(r)},T_{r+3}^{(r)},\ldots,T_{2n+r-1}^{(r)})=\begin{cases}
{\displaystyle 3\cdot(-1)^{n-1}}, &\emph{\emph{if}}~\frac{r+1}{2}\leq n<r;\\
{\displaystyle (-1)^{n-1}}, &\emph{\emph{if}}~2 \leq n<\frac{r+1}{2}.\end{cases}
\end{equation*}
Note also that the $r=3$ case of \eqref{gth2e1} reduces to the corresponding identity in Theorem \ref{Theorem3} above by the well-known formula \cite[Identity~175]{BQ}
$$\sum_{i=0}^{\lfloor{n}/{2}\rfloor}(-1)^{i}\binom{n-i}{i}=\begin{cases}
{\displaystyle (-1)^{n}}, &\text{\emph{\emph{if} $n\equiv 0$}}\text{ (mod
3)};\\
{\displaystyle (-1)^{n+1}}, &\text{\emph{\emph{if} $n\equiv 1$}}\text{ (mod
3)};\\
{\displaystyle 0}, &\text{\emph{\emph{if} $n\equiv 2$}} \text{ (mod 3)}.\end{cases}$$\\
\begin{thm}\label{gth3}
Let $r \geq 3$.  If $r$ is odd, then
\begin{equation}\label{gth3e1}
\det\big(1;T_1^{(r)},T_3^{(r)},\ldots,T_{2n-1}^{(r)}\big)=(-1)^{n-1}a_n,
\end{equation}
where $a_n=3a_{n-1}-a_{n-2}+a_{n-\frac{r+1}{2}}$, $n \geq r+1$, with $a_r=1+F_{r+1}$, $a_n=F_{2n-r+1}$ for $\frac{r+1}{2}\leq n <r$, and $a_n=0$ for $1 \leq n<\frac{r+1}{2}$.

If $r$ is even, then
\begin{equation}\label{gth3e2}
\det\big(1;T_1^{(r)},T_3^{(r)},\ldots,T_{2n-1}^{(r)}\big)=(-1)^{n-1}b_n,
\end{equation}
where $b_n=3b_{n-1}-b_{n-2}+b_{n-\frac{r}{2}}-b_{n-\frac{r}{2}-1}$, $n\geq r+1$, with $b_n=F_{2n-r+1}$ for $\frac{r}{2}\leq n\leq r$ and $b_n=0$ for $1 \leq n<\frac{r}{2}$.
\end{thm}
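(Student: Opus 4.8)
The plan is to extend the combinatorial framework used in Theorems \ref{gth1} and \ref{gth2}, interpreting the determinant as a signed sum over a family of marked $r$-tribonacci tilings and then applying a sign-changing involution. Write $D_n=\det(1;T_1^{(r)},T_3^{(r)},\ldots,T_{2n-1}^{(r)})$, so that a part of size $j$ carries the weight $a_j=T_{2j-1}^{(r)}=|\mathcal{T}_{2j-r}^{(r)}|$. Replacing each part $x_i$ of a composition $\rho=(x_1,\ldots,x_m)$ of $n$ by a tiling $\mu_i\in\mathcal{T}_{2x_i-r}^{(r)}$ followed by a marked $r$-mino, and concatenating, I would identify $D_n$ with the sum of the signs of the members of the set $\mathcal{G}_n$ of $r$-tribonacci tilings of length $2n$ in which certain $r$-minos are marked, every marked $r$-mino ends at an even-numbered position, and the final piece is a marked $r$-mino; here the sign of such a tiling is $(-1)^{n-(\#\text{ marked }r\text{-minos})}$. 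The doubling to length $2n$ comes from the index $2j-1$, exactly as the halving did in Theorem \ref{gth2}.

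Next I would define an involution on $\mathcal{G}_n$ by locating the rightmost non-terminal $r$-mino that ends at an even position and toggling its mark. This is sign-reversing and fixes precisely those tilings whose only even-ending $r$-mino is the terminal one. Such a survivor consists of a marked terminal $r$-mino preceded by an (unmarked) $r$-tribonacci tiling $\alpha$ of length $2n-r$ all of whose $r$-minos end at odd positions, and each survivor has sign $(-1)^{n-1}$. Hence $D_n=(-1)^{n-1}c_{2n-r}$, where $c_L$ denotes the number of $r$-tribonacci tilings of length $L$ in which every $r$-mino ends at an odd-numbered position, and it remains to show $c_{2n-r}=a_n$ (resp.\ $b_n$).

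To count $c_L$, I would decompose each such tiling as $\beta_0\,R\,\beta_1\,R\cdots R\,\beta_i$, where the $\beta_j$ are square-and-domino tilings and $R$ denotes an $r$-mino. The requirement that the $k$-th $r$-mino end at an odd position becomes a parity condition on $|\beta_0|+\cdots+|\beta_{k-1}|$; because the end position of the $k$-th $r$-mino is congruent to this partial sum plus $kr$, the parity of $r$ enters here, splitting the argument into two cases. For $r$ odd one finds that $\beta_0$ must have even length while $\beta_1,\ldots,\beta_{i-1}$ must have odd length (with $\beta_i$ free); for $r$ even the roles of even and odd are interchanged. Letting $S(x)=\sum_{\ell\ge0}F_{\ell+1}x^\ell=1/(1-x-x^2)$ with even and odd parts $S_e,S_o$, this yields the generating function $\sum_L c_Lx^L=S(x)+\frac{S_e(x)\,x^r S(x)}{1-x^r S_o(x)}$ for $r$ odd (and the same with $S_e,S_o$ swapped for $r$ even). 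Using $S_o(x)=x/(1-3x^2+x^4)$ and $S_e(x)=(1-x^2)/(1-3x^2+x^4)$, this simplifies to a rational function with denominator $1-3x^2+x^4-x^{r+1}$ for $r$ odd and $1-3x^2+x^4-x^r+x^{r+2}$ for $r$ even. Reading off the coefficient recurrence from the denominator, and passing from $L=2n-r$ to the index $n$ (so a step in $L$ of $2$ is a step in $n$ of $1$), gives exactly the stated recurrences $a_n=3a_{n-1}-a_{n-2}+a_{n-(r+1)/2}$ and $b_n=3b_{n-1}-b_{n-2}+b_{n-r/2}-b_{n-r/2-1}$, valid once $L$ exceeds the numerator degree $r$, i.e.\ for $n\ge r+1$.

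Finally, the initial conditions follow by inspecting $c_L$ directly for small $L$: when $L<r$ no $r$-mino fits, so $c_L=F_{L+1}$, giving $a_n=F_{2n-r+1}$ (resp.\ $b_n$) in the stated ranges, while $c_L=0$ for $L<0$. The one genuinely parity-sensitive boundary is $L=r$ (that is, $n=r$): a lone $r$-mino of length $r$ ends at position $r$, which is admissible only when $r$ is odd, producing the extra $+1$ in $a_r=1+F_{r+1}$ while leaving $b_r=F_{r+1}$. I expect the main obstacle to be the careful translation of the absolute ``ends at an odd position'' constraint into block-length parities and its coupling to the parity of $r$; since deleting a tile shifts all subsequent positions, a direct recurrence on $c_L$ is awkward, and the even/odd generating-function bisection is what makes the two recurrences fall out cleanly.
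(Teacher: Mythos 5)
Your proposal is correct, and its first half coincides exactly with the paper's argument: the paper also interprets $\det\big(1;T_1^{(r)},T_3^{(r)},\ldots,T_{2n-1}^{(r)}\big)$ as a signed sum over length-$2n$ tilings in which $r$-minos ending at even positions may be marked and the final piece is a marked $r$-mino (its set $\mathcal{H}_n$), applies the same involution (toggle the mark on the rightmost non-terminal even-ending $r$-mino), and reduces to counting the survivors, i.e.\ tilings of length $2n-r$ all of whose $r$-minos end at odd positions. Where you genuinely diverge is in evaluating this count. The paper stays purely combinatorial: for $r$ odd it classifies survivors by suffix ($ss$, $d$, $sr$, and $ds$ or $dr$ obtained by inserting a domino before the final piece), getting $a_n=2a_{n-1}+a_{n-(r+1)/2}+(a_{n-1}-a_{n-2})$ directly; for $r$ even it uses the forms $\alpha s\beta ds$ and $\alpha s\beta rs$ with $\beta$ square-free, getting the $b_n$ recurrence by subtraction. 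You instead decompose each survivor into square-and-domino blocks separated by $r$-minos, convert the absolute ``ends at an odd position'' condition into parity constraints on block lengths (first block even, interior blocks odd for $r$ odd; interchanged for $r$ even -- your parity computation is right), and run the Fibonacci bisection $S_e(x)=(1-x^2)/(1-3x^2+x^4)$, $S_o(x)=x/(1-3x^2+x^4)$ through the transfer-matrix-style product; the stated recurrences then fall out of the denominators $1-3x^2+x^4-x^{r+1}$ and $1-3x^2+x^4-x^r+x^{r+2}$, with the validity range $n\geq r+1$ correctly tied to the numerator degree $r$, and your treatment of the boundary cases (in particular the lone $r$-mino at length $r$, admissible only for $r$ odd, giving $a_r=1+F_{r+1}$ versus $b_r=F_{r+1}$) is accurate. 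The trade-off: the paper's suffix analysis is shorter and entirely bijective, while your route costs some rational-function algebra but delivers, as a by-product, closed-form generating functions equivalent to those the paper only asserts without proof immediately after the theorem.
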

\begin{proof}
Let us refer to an $r$-mino within a tiling as \emph{even} or \emph{odd}, depending on whether its rightmost section corresponds to an even- or odd-numbered position, respectively.  Let $\mathcal{H}_n$ denote the set of $r$-tribonacci tilings of length $2n$ wherein even $r$-minos may be marked, with the final piece a marked $r$-mino.  Let $\lambda \in \mathcal{H}_n$ have sign $(-1)^{n-(\text{\# marked $r$-minos})}$.  Then it is seen that $\det(1;T_1^{(r)},\ldots,T_{2n-1}^{(r)})$ gives the sum of the signs of all $\lambda$.  Define a sign-changing involution on $\mathcal{H}_n$ by either marking or unmarking the rightmost non-terminal even $r$-mino.  This operation is not defined on the subset $\mathcal{H}_n^*$ of $\mathcal{H}_n$ whose members contain no even $r$-mino outside of the terminal $r$-mino.  Each member of $\mathcal{H}_n^*$ has sign $(-1)^{n-1}$, so we must determine $|\mathcal{H}_n^*|$.  Note that members of $\mathcal{H}_n^*$ are synonymous with tilings in $\mathcal{T}_{2n-r}^{(r)}$ containing only odd $r$-minos.

First assume $r$ is odd and let $a_n=|\mathcal{H}_n^*|$.  If $1 \leq n <\frac{r+1}{2}$, then $\mathcal{H}_n^*$ is empty, whereas if $\frac{r+1}{2} \leq n <r$, then members of $\mathcal{H}_n^*$ correspond to square-and-domino tilings and thus have cardinality $F_{2n-r+1}$.  If $n=r$, then the tiling consisting of a single $r$-mino is also possible since it would be odd in this case, which implies $a_r=1+F_{r+1}$.  If $n \geq r+1$, then
$\lambda \in \mathcal{H}_n^*$ ending in $ss$ or $d$ may be obtained from members of $\mathcal{H}_{n-1}^*$ by appending the respective suffix.  Furthermore, appending $sr$ to an arbitrary tiling in $\mathcal{H}_{n-\frac{r+1}{2}}^*$ gives those $\lambda$ with this suffix.  Members of $\mathcal{H}_n^*$ ending in $ds$ or $dr$ may be obtained from those in $\mathcal{H}_{n-1}^*$ not ending in $d$ by inserting a $d$ directly prior to the final piece.  Since $\lambda \in \mathcal{H}_n^*$ cannot end in $rr$ or $rs$, all members of $\mathcal{H}_n^*$ have thus been accounted for.  Combining the previous cases then implies $a_n=2a_{n-1}+a_{n-\frac{r+1}{2}}+(a_{n-1}-a_{n-2})$, which completes the odd case.

Now assume $r$ is even and let $b_n=|\mathcal{H}_n^*|$ in this case.  Upon verifying the initial conditions, one may assume $n \geq r+1$.  Then there are clearly $2a_{n-1}$ members of $\mathcal{H}_n^*$ ending in either $ss$ or $d$.  Since $r$ even implies tilings in $\mathcal{H}_n^*$ are of even length, the final piece cannot be $r$.  Thus, the remaining possibilities are tilings of the form $\lambda=\alpha s \beta ds$ or $\lambda=\alpha s\beta rs$, where $\alpha$ and $\beta$ are possibly empty and $\beta$ does not contain $s$.  Upon removal of the rightmost $d$ or $r$, it is apparent that there are $a_{n-1}-a_{n-2}$ or $a_{n-\frac{r}{2}}-a_{n-\frac{r}{2}-1}$ possibilities, respectively, for tilings of the stated form, by subtraction.  Combining the preceding cases gives \eqref{gth3e2} and completes the proof.
\end{proof}

Using the preceding result, one can show the generating function $f(x)=\sum_{n\geq 1}a_nx^n$ for the $r$ odd case is given by
$$f(x)=\frac{x^{\frac{r+1}{2}}+x^r}{1-3x+x^2-x^{\frac{r+1}{2}}},$$
whereas $g(x)=\sum_{n\geq1}b_nx^n$ in the even case is given by
$$g(x)=\frac{x^{\frac{r}{2}}(1-x-x^\frac{r}{2})}{1-3x+x^2-x^{\frac{r}{2}}+x^{\frac{r}{2}+1}}.$$

\noindent\emph{Remark:} Letting $r=3$ in \eqref{gth3e1} implies $a_n=3a_{n-1}$ for $n \geq 4$, with $a_3=4$, which yields the formula found in the previous section for $\det(1;T_1,T_3,\ldots,T_{2n-1})$.  Taking $r=4$ in \eqref{gth3e2} implies $b_n$ satisfies the third-order recurrence $b_n=3b_{n-1}-b_{n-3}$ for $n \geq 5$, with $b_2=1$, $b_3=2$, and $b_4=5$.\medskip

\begin{thm}\label{gth4}
Let $n \geq 1$ and $r \geq 3$.  Then
\begin{equation}\label{gth4e1}
\det(1;T_r^{(r)},T_{r+2}^{(r)},\ldots,T_{2n+r-2}^{(r)}) = \begin{cases}
{\displaystyle (-1)^{n-1}k_n^{(r)}}, &\emph{if}~r~\emph{is~odd};\\
{\displaystyle (-1)^{n-1}\sum_{i=0}^{n-1}a_i^{(r/2)}a_{n-1-i}^{(r/2)}}, &\emph{if}~r~\emph{is~even},\end{cases}
\end{equation}
where $k_n^{(r)}$ is determined by $$\sum_{n\geq1}k_n^{(r)}x^n=\frac{x+x^{\frac{r+1}{2}}}{1-2x+x^2-x^{\frac{r+1}{2}}-x^r}.$$
In particular, for $r=3$, we have $$\det(1;T_3,T_5,\ldots,T_{2n+1})=(-2)^{n-1}\sum_{i=0}^{n-1}2^{-i-\left\lfloor {i}/{2}\right\rfloor}\,{n-1-i\choose \left\lfloor{i}/{2}\right\rfloor}.$$
\end{thm}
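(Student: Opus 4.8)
The plan is to extend the combinatorial method already used for Theorems~\ref{gth2} and \ref{gth3}. The first step is to reinterpret the entries: since $|\mathcal{T}_\ell^{(r)}|=T_{\ell+r-1}^{(r)}$, the $j$-th entry satisfies $a_j=T_{2j+r-2}^{(r)}=|\mathcal{T}_{2j-1}^{(r)}|$, so a part of size $j$ in the composition of $n$ associated with a nonzero term of the determinant corresponds to an $r$-tribonacci tiling of \emph{odd} length $2j-1$. Appending a single square (a separator of length $1$) turns such a tiling into a block of even length $2j$, and I would therefore interpret $\det(1;T_r^{(r)},\ldots,T_{2n+r-2}^{(r)})$ as the sum of the signs of a set $\mathcal{H}_n$ of $r$-tribonacci tilings of length $2n$ in which any square ending at an even-numbered position may be marked and whose final piece is a marked square, with $\lambda\in\mathcal{H}_n$ given sign $(-1)^{n-(\#\text{ marked squares})}$. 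The marked squares are exactly the separators, so their number is the number of parts $\nu(\rho)=m$, and the decoding is unambiguous because the segments delimited by the marked (even) squares automatically have odd length.

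I would then define a sign-reversing involution on $\mathcal{H}_n$ by marking or unmarking the rightmost non-terminal square that ends at an even position. The survivors $\mathcal{H}_n^*$ are precisely the tilings having no such even square apart from the terminal one; each carries a single marked square and hence sign $(-1)^{n-1}$. Deleting the terminal square places $\mathcal{H}_n^*$ in bijection with the set of $r$-tribonacci tilings of length $2n-1$ containing no square in an even position, so $\det(1;T_r^{(r)},\ldots,T_{2n+r-2}^{(r)})=(-1)^{n-1}|\mathcal{H}_n^*|$ and it remains only to count these tilings.

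To count the survivors I would track the parity of positions. Let $f_N$ and $g_N$ count the admissible tilings of a row of $N$ cells (using squares, dominoes, and $r$-minos with no square at an even position) according as the leftmost cell lies at an odd or an even position, with $f_0=g_0=1$; the desired count is $f_{2n-1}$. Conditioning on the first tile gives coupled recurrences in which the domino always preserves the phase while the $r$-mino preserves or reverses it according to the parity of $r$, forcing the two parities to be treated separately. For $r$ \emph{even} one obtains $G(x)=1/(1-x^2-x^r)$ and $F(x)=G(x)+xG(x)^2$; here a survivor of odd length must contain exactly one square, splitting it into two all-domino-and-$r$-mino tilings, and extracting $[x^{2n-1}]F$ while halving each domino (to a square) and each $r$-mino (to an $(r/2)$-mino) yields the convolution $\sum_{i=0}^{n-1}a_i^{(r/2)}a_{n-1-i}^{(r/2)}$. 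For $r$ \emph{odd} the same elimination gives
\[
F(x)=\frac{1+x-x^2+x^r}{(1-x^2)^2-x^{r+1}-x^{2r}},
\]
whose denominator involves only even powers of $x$; isolating the odd-indexed coefficients of $F$ (reindexing $x^{2n-1}\mapsto x^n$) then yields exactly
\[
\sum_{n\geq1}k_n^{(r)}x^n=\frac{x+x^{(r+1)/2}}{1-2x+x^2-x^{(r+1)/2}-x^r},
\]
as claimed in \eqref{gth4e1}. Specializing to $r=3$ recovers the stated closed form for $\det(1;T_3,T_5,\ldots,T_{2n+1})$.

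I expect the main obstacle to be the survivor count rather than the determinant-to-tiling dictionary, which is by now routine. The position-parity restriction on squares is what forces the two-phase bookkeeping via $f_N$ and $g_N$, and in the odd-$r$ case the extraction of the odd-indexed coefficients of $F(x)$ is the delicate point: one must check that the denominator of $F$ is genuinely a function of $x^2$—which hinges on $r+1$ and $2r$ both being even—so that this extraction is clean and produces precisely the advertised generating function.
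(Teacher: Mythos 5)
Your proposal is correct, and its first two steps coincide with the paper's: your $\mathcal{H}_n$ and $\mathcal{H}_n^*$ are exactly the paper's $\mathcal{K}_n$ and $\mathcal{K}_n^*$ (marked squares at even positions, marked terminal square, sign $(-1)^{n-(\#\text{ marked squares})}$, involution toggling the rightmost non-terminal even square). Where you genuinely depart is in the enumeration of the survivors, which is the heart of the proof. The paper decomposes a survivor as $\alpha_0 s\alpha_1 s\cdots\alpha_p s$ according to its odd-position squares, exploits the parity constraints on the number of $r$-minos in each section, and for $r=3$ builds an explicit bijection with tilings by trominoes and two-colored squares (the sets $\Delta_{n-1}\cup\Delta_{n-2}$), which yields the double-sum closed form directly; for general odd $r$ it computes a generating function per section and sums a geometric series over $p$. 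You instead run a two-phase transfer-matrix argument: coupled recurrences $f_N,g_N$ indexed by the parity of the starting cell, elimination to get $F(x)=\frac{1+x-x^2+x^r}{(1-x^2)^2-x^{r+1}-x^{2r}}$ for odd $r$ (and $F=G+xG^2$, $G=\frac{1}{1-x^2-x^r}$ for even $r$), followed by extraction of odd-indexed coefficients. That extraction is legitimate for exactly the reason you flag: the denominator is even in $x$ since $r+1$ and $2r$ are even, and the odd part of the numerator is $x+x^r$, so the substitution $x^2\mapsto x$ gives precisely $\frac{x+x^{(r+1)/2}}{1-2x+x^2-x^{(r+1)/2}-x^r}$; the even-$r$ convolution $\sum_{i=0}^{n-1}a_i^{(r/2)}a_{n-1-i}^{(r/2)}$ also checks out. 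Your route is more mechanical and uniform in $r$, whereas the paper's buys combinatorial insight and hands you the $r=3$ double sum for free; in your treatment that last formula needs one additional routine step you left implicit, namely expanding $\frac{x+x^2}{1-2x-x^3}$ (the $r=3$ specialization) by $\frac{1}{1-2x-x^3}=\sum_{m,j}\binom{m}{j}2^{m-j}x^{m+2j}$ to recover $(-2)^{n-1}\sum_{i=0}^{n-1}2^{-i-\left\lfloor i/2\right\rfloor}\binom{n-1-i}{\left\lfloor i/2\right\rfloor}$.
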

\begin{proof}
Let $\mathcal{K}_n=\mathcal{K}_n^{(r)}$ denote the set of $r$-tribonacci tilings of length $2n$ in which squares occurring in even-numbered positions may be marked and whose final piece is a marked square.  Define the sign of $\lambda \in \mathcal{K}_n$ by $(-1)^{n-(\#~\text{marked~squares})}$. Note that $\lambda$ may be written as $\lambda=\lambda_1\lambda_2\cdots \lambda_\ell$ for some $\ell \geq 1$ such that $\lambda_i=\lambda_i's$ for $1 \leq i \leq \ell$, where the terminal $s$ is marked and $\lambda_i'$ contains no marked squares.  Note that if $\lambda_i'$ has length $k_i$ for each $i$, then there are $\prod_{i=1}^\ell T_{r+k_i-1}^{(r)}$ possibilities for the various $\lambda_i'$ once $\ell$ and the $k_i$ are specified.  Furthermore, we have $k_1+\cdots+k_\ell=2n-\ell$ with each $k_i$ odd.  Since $\ell$ may be identified as the number of cycles within a permutation that contributes a non-zero term in the expansion of $\det(1;T_r^{(r)},\ldots,T_{2n+r-2}^{(r)})$, it follows that the sum of the signs of all members of $\mathcal{K}_n$ is given by $\det(1;T_r^{(r)},\ldots,T_{2n+r-2}^{(r)})$.

We define a sign-changing involution of $\mathcal{K}_n$ by identifying the rightmost square that is in an even but not in the terminal position and either marking it if it is unmarked or removing the marking from it if marked.  This involution is not defined on the subset $\mathcal{K}_n^*$ of $\mathcal{K}_n$ consisting of those tilings in which squares (necessarily unmarked) occur only in odd-numbered positions.  Note that each member of $\mathcal{K}_n^*$ has sign $(-1)^{n-1}$ since only the terminal square is marked.  To determine $|\mathcal{K}_n^{*}|$, we consider cases based on the parity of $r$.  If $r$ is even, then members of $\mathcal{K}_n^*$ can contain only one square in addition to the terminal square and thus must be of the form $\lambda=\lambda's\lambda''s$, where the sections $\lambda'$ and $\lambda''$ contain no squares.  If $2i$ denotes the length of $\lambda'$, where $0 \leq i \leq n-1$, then considering all possible $i$ implies $|\mathcal{K}_n^{*}|=\sum_{i=0}^{n-1}a_i^{(r/2)}a_{n-1-i}^{(r/2)}$, upon halving the length of each piece within a tiling.

Assume now $r=3$ and let us find $|\mathcal{K}_n^{*}|$ in this case.  The proof of \eqref{gth4e1} for general $r$ odd, which we outline briefly below, will follow similarly.   We may clearly assume $n \geq 2$.  Let $\alpha=\alpha_0s\alpha_1\cdots s\alpha_ps \in \mathcal{K}_n^*$, where only the terminal $s$ is marked, $p \geq 0$, and each section $\alpha_0,\ldots,\alpha_p$ contains no squares.  Note that if $p\geq 1$, then $\alpha_0$ and $\alpha_p$ must be of even length, with $\alpha_1,\ldots,\alpha_{p-1}$ of odd length.  Let $\Delta_n$ denote the set of tilings of length $n$ using only square and tromino pieces such that the squares come in two colors.  Since
\begin{align*}
|\Delta_{n-1} \cup \Delta_{n-2}|&=\sum_{i=0}^{\left\lfloor\frac{n-1}{2}\right\rfloor}2^{n-1-3i}\binom{n-1-2i}{i}+\sum_{i=0}^{\left\lfloor\frac{n-2}{2}\right\rfloor}2^{n-2-3i}\binom{n-2-2i}{i}\\
&=\sum_{i=0}^{n-1}2^{n-1-i-\left\lfloor{i}/{2}\right\rfloor}\binom{n-1-i}{\left\lfloor{i}/{2}\right\rfloor},
\end{align*}
to establish the $r=3$ case, it suffices to define a bijection $h$ between $\mathcal{K}_n^*$ and $\Delta_{n-1} \cup \Delta_{n-2}$.

To do so, we will make use of the decomposition of $\alpha$ given above and consider each of the sections $\alpha_i$ separately, starting with $\alpha_0$ and first assuming $p\geq 1$.  Note that since $\alpha_0$ is of even length, it must contain an even number of trominos and thus we have
$\alpha_0=d^{i_1}td^{i_2}td^{i_3}\cdots td^{i_{2a+1}}$
for some $a \geq 0$, where $i_j \geq 0$ for all $1 \leq j \leq 2a+1$.  Define
$$(\alpha_0)'=s_2^{i_1}s_1^{i_2}ts_2^{i_3}s_1^{i_4}t\cdots s_2^{i_{2a-1}}s_1^{i_{2a}}ts_2^{i_{2a+1}},$$
where $s_1$ and $s_2$ denote squares of two different colors.  If $1 \leq k \leq p-1$, then the section $\alpha_k$ of $\alpha$ contains an odd number of trominos and thus is of the form
$\alpha_k=d^{j_1}td^{j_2}t\cdots d^{j_{2b-1}}td^{j_{2b}}$ for some $b \geq 1$, where $j_i \geq 0$ for $1 \leq i \leq 2b$.  For each $k$, let
$$(s\alpha_k)'=s_1^{j_1+1}s_2^{j_2+1}(s_1^{j_3}ts_2^{j_4})\cdots (s_1^{j_{2b-1}}ts_2^{j_{2b}}).$$
Next observe that the final section $\alpha_p$ must contain an even number of trominos (possibly none).  If $\alpha_p=d^c$ for some $c \geq 0$, then let $(s\alpha_p s)'=s_1^c$.  Otherwise, $\alpha_p$ is of the form $\alpha_p=d^{j_1}td^{j_2}t\cdots d^{j_{2b}}td^{j_{2b+1}}$ where $b \geq 1$, in which case we put
$$(s\alpha_ps)'=s_1^{j_1+1}s_2^{j_2+1}(s_1^{j_3}ts_2^{j_4})\cdots(s_1^{j_{2b-1}}ts_2^{j_{2b}})s_1^{j_{2b+1}}.$$
Let $h(\alpha)$ be given by
$h(\alpha)=(\alpha_0)'(s\alpha_1)'\cdots(s\alpha_{p-1})'(s\alpha_ps)'$, where the various tilings are concatenated.  Note that if $\alpha_p$ contains no trominos, then $h(\alpha) \in \Delta_{n-1}$, whereas if $\alpha_p$ contains a tromino (hence, at least two), then $h(\alpha) \in \Delta_{n-2}$.

If $p=0$, then $\alpha=\alpha_0s$ with $\alpha_0=d^{i_1}t\cdots d^{i_{2a-1}}td^{i_{2a}}$, where $a \geq 1$, and we set
$$h(\alpha)=s_2^{i_1}s_1^{i_2}t\cdots s_2^{i_{2a-3}}s_1^{i_{2a-2}}ts_2^{i_{2a-1}}s_1^{i_{2a}} \in \Delta_{n-2}.$$
Note that in this case we obtain all members of $\Delta_{n-2}$ that do not contain an $s_2$ directly following an $s_1$, which were missed in the preceding case.

To reverse $h$, first suppose $\delta\in \Delta_{n-2}$ and consider the number of times that $s_2$ directly follows $s_1$ (which will correspond to $p$ in the decomposition of $h^{-1}(\delta)$ in this case).  If this does not occur, then simply reverse the last operation above.  Otherwise, one may divide up $\delta$ accordingly into sections each starting with a non-empty run of $s_1$ directly followed by a non-empty run of $s_2$ (with $\alpha_0$ comprising any remaining initial tiles).  Then by reversing the prime operation above in each of the cases, one can reconstruct the sections $\alpha_0,\ldots,\alpha_p$, and hence $\alpha$.  If $\delta \in \Delta_{n-1}$, then proceed similarly except that we let $\alpha_p=d^c$, where $c$ denotes the length of a (possibly empty) terminal run of $s_1$.  This completes the proof of the $r=3$ case.

One may generalize the argument above when $r=3$ to the general $r$ odd case.  Doing so implies that the generating function for the number of members of $\mathcal{K}_n^*$ of the form $\alpha_0s\cdots\alpha_ps$ where $p \geq 0$ is given by
\begin{align*}
&x^{p\left(\frac{r-3}{2}\right)+1}\cdot x^{2p}\left(\frac{1}{1-2x+x^2-x^r}\right)^{p+1}+x^{(p-1)\left(\frac{r-3}{2}\right)+r-1}\cdot x^{2p}\left(\frac{1}{1-2x+x^2-x^r}\right)^{p+1}\notag\\
&=\left(x^{-\left(\frac{r-1}{2}\right)}+1\right)\left(\frac{x^{\frac{r+1}{2}}}{1-2x+x^2-x^r}\right)^{p+1}.
\end{align*}
Summing over all $p \geq 0$ implies
$$\sum_{n \geq 1}|\mathcal{K}_n^*|x^n=\frac{x+x^{\frac{r+1}{2}}}{1-2x+x^2-x^{\frac{r+1}{2}}-x^r},$$
from which the general odd case follows since $\det(1;T_r^{(r)},\ldots,T_{2n+r-2}^{(r)})=(-1)^{n-1}|\mathcal{K}_n^*|$.
\end{proof}

\noindent\emph{Remark:} Taking $r=3$ in \eqref{gth4e1} gives
$$\sum_{n\geq1}\det(1;T_3,T_5,\ldots,T_{2n+1})x^n=\frac{x-x^2}{1+2x+x^3},$$
which yields the explicit formula stated above in this case.  Note that one may find a formula for the coefficients $k_n$ in terms of a double sum of binomial coefficients, upon considering cases on the parity of $n$ and whether $r$ is congruent to $1$ or $3$ (mod $4$), the details of which we leave to the reader.\medskip
\begin{thm}\label{gth5}
Let $r \geq 3$ be odd and $n \geq \frac{r+3}{2}$.  If $r \geq 7$, then
\begin{equation}\label{gth5e1}
\det(1;T_{r+2}^{(r)},T_{r+4}^{(r)},\ldots,T_{2n+r}^{(r)}) = \begin{cases}
{\displaystyle (-1)^{n-q}}, &\emph{if}~{\displaystyle n=q(r-1)/2};\\
{\displaystyle 2\cdot(-1)^{n-1-q}}, &\emph{if}~{\displaystyle n-1=q(r-1)/2};\\
{\displaystyle (-1)^{n-q}}, &\emph{if}~{\displaystyle n-2=q(r-1)/2};\\
{\displaystyle 0}, &\emph{otherwise}.
\end{cases}
\end{equation}
If $r = 5$, then
\begin{equation}\label{gth5e2}
\det(1;T_7^{(5)},T_9^{(5)},\ldots,T_{2n+5}^{(5)})=\begin{cases}
{\displaystyle 0}, &\emph{if}~n\equiv 0 \text{ \emph{(mod 2)}};\\
{\displaystyle 2\cdot(-1)^{\frac{n-1}{2}}}, &\emph{if}~n\equiv 1 \text{ \emph{(mod 2)}}.
\end{cases}
\end{equation}
If $r=3$, then $\det(1;T_5,T_7,\ldots,T_{2n+3})=4$ for all $n \geq 3$.
\end{thm}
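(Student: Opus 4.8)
The plan is to prove all three cases of Theorem~\ref{gth5} at once by a single sign-reversing involution on marked tilings, in the spirit of the proofs of Theorems~\ref{gth2} and~\ref{gth4}, and then to read off the displayed values by enumerating the survivors separately according to the size of $r$. Writing $a_j=T_{2j+r}^{(r)}=|\mathcal{T}_{2j+1}^{(r)}|$, the composition/cycle interpretation of the determinant expresses $\det(1;T_{r+2}^{(r)},\ldots,T_{2n+r}^{(r)})$ as a signed sum over sequences $(\nu_1,\ldots,\nu_m)$ of $r$-tribonacci tilings in which each $\nu_i$ has \emph{odd} length $2x_i+1\geq 3$ and $x_1+\cdots+x_m=n$, the sign being $(-1)^{n-m}$. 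First I would encode each such sequence as one marked tiling by concatenating the $\nu_i$ and appending to each segment a marker piece so that the markers all land in a common residue class of positions and the rightmost marker is terminal, rewriting the sign as $(-1)^{n-(\#\,\text{markers})}$, so that $m$ becomes the number of marked pieces.

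The hard part will be arranging this marking so that the involution is genuinely well defined, and this is where the odd segment lengths cause trouble. Because the segments here have odd rather than even length (contrast Theorem~\ref{gth2}), one cannot simply split or merge a single segment while respecting the constraints: cutting an odd segment always leaves an inadmissible even piece. I would get around this by choosing the marker of the appropriate parity so that each extended segment occupies a fixed residue class, and then defining the involution to toggle the marking of the rightmost non-terminal markable piece, verifying that toggling changes the number of markers by one while keeping every resulting segment of admissible odd length $\geq 3$. The configurations on which this toggle fails are exactly those with no interior markable piece sufficiently far from both ends; these are the survivors, and the parity bookkeeping just described is the first of the two main obstacles.

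The remaining work is to enumerate the survivors, which I expect to form a short explicit list of \emph{capped} tilings of the shape $c_1\,d^{\,t}\,c_2$, whose interior consists only of dominoes and each of whose caps $c_i$ is a square or an $r$-mino; the leading odd-length cap is precisely what shifts the domino run so that no interior piece lies in the marked residue class (compare the survivors $sd^{n-1}s,\ldots$ in the proof of Theorem~\ref{gth2}). The number $t$ of interior dominoes is then forced by the total length, and since an $r$-mino occupies the room of $\tfrac{r-1}{2}$ dominoes together with one square, a cap of type $r$ is admissible only when $n$ (respectively $n-1$, $n-2$) is divisible by $\tfrac{r-1}{2}$, which is the source of the congruence conditions. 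The four cappings group as $ss$, the pair $\{sr,rs\}$, and $rr$, producing contributions $(-1)^{n-q}$, $2\cdot(-1)^{n-1-q}$, and $(-1)^{n-q}$; for $r\geq 7$ these divisibility conditions are mutually exclusive for a given $n$, so at most one group survives and one obtains the four-way formula \eqref{gth5e1}, the value $0$ arising when none applies.

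Finally I would treat the small cases as coincidences among these same survivor families. For $r=5$ we have $\tfrac{r-1}{2}=2$, so the cappings $ss$ and $rr$ are simultaneously admissible exactly when $n$ is even, and there they carry opposite signs and cancel, leaving $0$; when $n$ is odd only the middle pair $\{sr,rs\}$ survives, contributing $2\cdot(-1)^{(n-1)/2}$, which yields \eqref{gth5e2}. For $r=3$ we have $\tfrac{r-1}{2}=1$, so every integer is divisible by it and all three families are simultaneously present for $n\geq 3$; their signed contributions are $+1$, $+2$, and $+1$, independent of $n$, and sum to the claimed constant $4$. Thus the two genuine obstacles are the parity argument that legitimizes the involution for odd-length segments and the careful tracking of which capping families coexist, and how their signs interfere, in the small-$r$ regimes.
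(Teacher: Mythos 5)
There is a genuine gap at the heart of your plan, and it propagates into the survivor analysis. You propose to concatenate the odd-length segments into a single marked tiling and then toggle the rightmost non-terminal markable piece, as in the proofs of Theorems \ref{gth2} and \ref{gth4}. But here the segments have odd length $2x_i+1\geq 3$ with $x_1+\cdots+x_m=n$, so the concatenated length is $\sum_i(2x_i+1)=2n+m$ (and $2n+2m$ if markers are inserted pieces): it depends on $m$. There is no fixed ambient length in which marks can be toggled; equivalently, unmarking a marker square merges $\nu_i\, s\, \nu_{i+1}$ into a segment of length $2(x_i+x_{i+1}+1)+1$, so a toggle changes $\sum_i x_i=n$ itself. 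This is a length obstruction, not a parity obstruction, and no choice of marker piece or residue class repairs it ($s$ breaks the sum constraint, $d$ breaks admissible parity). This is exactly why the paper's proof abandons the marked-tiling framework for this theorem and instead defines a sign-changing involution directly on the sequences $\lambda=(\lambda_1,\ldots,\lambda_a)\in\mathcal{L}_n$, via five local rewriting rules that move and create/absorb actual tiles while splitting or merging components, namely $\lambda_a=\alpha d \mapsto (\alpha,\,sd)$, $\alpha ss \mapsto (\alpha,\,sss)$, $\alpha rs \mapsto (\alpha,\,srs)$, $\alpha ds \mapsto (\alpha s,\,ds)$, $\alpha r \mapsto (\alpha s,\,r)$, applied at the rightmost component disqualifying $\lambda$ from an explicit exceptional set $U$; each rule is engineered so that both the sign and the constraint $\sum_i \ell_i=n$ are respected.

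Your identification of the survivors is also incorrect, and demonstrably so. A survivor of shape $c_1d^{\,t}c_2$ with all-domino interior, caps $c_i\in\{s,r\}$, and only its terminal piece marked would carry sign $(-1)^{n-1}$ in any marking scheme of the kind you describe — never $(-1)^{n-q}$ with $q=2n/(r-1)$ growing with $n$ — and a configuration containing at most two $r$-minoes cannot produce divisibility conditions modulo $(r-1)/2$ at all. In the paper the surviving set is $U=\{(r,\ldots,r),\,(r,\ldots,r,ds),\,(sd,r,\ldots,r),\,(sd,r,\ldots,r,ds)\}\cap\mathcal{L}_n$: sequences almost all of whose components are singleton $r$-minoes, each contributing $(r-1)/2$ to $n$ (this is the true source of the congruences), possibly capped by an initial $sd$ and/or final $ds$ component; the signs are $(-1)^{n-a}$ with $a=q$, $q+1$, $q+2$ components in the three respective cases. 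Your endgame bookkeeping — mutual exclusivity of the three families for $r\geq 7$, cancellation of the two same-parity families for $r=5$, and all four configurations present with sign $+1$ for $r=3$ — does match the paper's analysis of $U$, but it is grafted onto survivor families that a correct involution would not produce. So both pillars of the argument, a well-defined involution and the identification of its fixed points, are missing as stated.
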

\begin{proof}
Given $n \geq \frac{r+3}{2}$ and $1 \leq a \leq n$, let $\mathcal{L}_n^{(a)}$ denote the set of sequences $\lambda=(\lambda_1,\ldots,\lambda_a)$ of tilings such that $\lambda_i \in \mathcal{T}_{2\ell_i+1}^{(r)}$ for $1 \leq i \leq a$ where $\ell_1+\cdots+\ell_a=n$ and $\ell_i \geq 1$ for all $i$. Let $\lambda \in \mathcal{L}_n^{(a)}$ have sign $(-1)^{n-a}$ and $\mathcal{L}_n=\cup_{a=1}^{n}\mathcal{L}_n^{(a)}$.  From the definition of the determinant, the sum of the signs of the members of $\mathcal{L}_n$ is seen to be given by $\det(1;T_{r+2}^{(r)},\ldots,T_{2n+r}^{(r)})$.  We define preliminarily an involution on $\mathcal{L}_n$ as follows.  Given $\lambda \in \mathcal{L}_n^{(a)}$ where $1 \leq a<n$, let $\lambda'=(\lambda_1,\ldots,\lambda_{a-1},\lambda_a',\lambda_{a+1}')$ be obtained from $\lambda$ by making the indicated replacement of the tiling $\lambda_a$ in each case:
\begin{itemize}
\item $\lambda_a=\alpha d$, $|\alpha|\geq 3$ $\rightarrow$ $\lambda_a'=\alpha,~\lambda_{a+1}'=sd$,
\item $\lambda_a=\alpha ss$, $|\alpha|\geq 3$  $\rightarrow$ $\lambda_a'=\alpha,~\lambda_{a+1}'=sss$,
\item $\lambda_a=\alpha rs$, $|\alpha|\geq 3$ $\rightarrow$ $\lambda_a'=\alpha,~\lambda_{a+1}'=srs$,
\item $\lambda_a=\alpha ds$, $|\alpha|\geq 2$ $\rightarrow$ $\lambda_a'=\alpha s,~\lambda_{a+1}'=ds$,
\item $\lambda_a=\alpha r$, $|\alpha|\geq 2$  $\rightarrow$ $\lambda_a'=\alpha s,~\lambda_{a+1}'=r$.
\end{itemize}
One may verify in each case that $\lambda'$ indeed belongs to $\mathcal{L}_n^{(a+1)}$.

Let $U=U_n$ be given by $U:=\{(r,\ldots,r),(r,\ldots,r,ds),(sd,r,\ldots,r),(sd,r,\ldots,r,ds)\}\cap\mathcal{L}_n$.  Note that $U$ may consist of up to four elements or be empty depending on $n$.  We extend the prime operation defined above to $\mathcal{L}_n-U$ by identifying the rightmost tiling $\rho_i$ within $\rho=(\rho_1,\rho_2,\ldots)\in \mathcal{L}_n-U$ such that $\rho_i$ (or possibly $\rho_i$, together with $\rho_{i-1}$) disqualifies $\rho$ from membership in $U$ and applying the appropriate operation (or its inverse) from those defined above to $\rho_i$ (or to $\rho_i$ and $\rho_{i-1}$) so as to obtain $\rho'$.  For example, if $n=10$, $r=5$, and $\rho=(sd,s^2ds,d^2s,r,r,ds)\in \mathcal{L}_{10}^{(6)}$, then $\rho_4=r$, taken together with $\rho_3=d^2s$, is the rightmost tiling that disqualifies $\rho$ from belonging to $U$, which implies $\rho'=(sd,s^2ds,d^2r,r,ds)\in \mathcal{L}_{10}^{(5)}$.  One may verify that the operation $\rho \mapsto \rho'$ defines a sign-changing involution of $\mathcal{L}_n-U$ in all cases.

Thus, the determinant in question equals the sum of the signs of the members of $U$.  If $r \geq 7$, then $U$ is empty if $n \not\equiv 0,1,2 \text{ mod}\left(\frac{r-1}{2}\right)$, whence the determinant is zero in these cases.  Otherwise, note that each tiling consisting of a single $r$ contributes $(-1)^{(r-3)/2}$ towards the sign.  Thus, if $n$ is divisible by $\frac{r-1}{2}$, then $U$ consists of a single element $(r,\ldots,r)$ whose sign is $(-1)^{q(r-3)/2}=(-1)^{n-q}$, where $n=q(r-1)/2$.  If $n \equiv 1 \text{ mod}\left(\frac{r-1}{2}\right)$, then $U$
contains two elements each having sign $(-1)^{n-1-q}$, whereas if $n \equiv 2 \text{ mod}\left(\frac{r-1}{2}\right)$, then $U$ consists of a single element of sign $(-1)^{n-q}$, where $q$ is as given above in the respective cases.  If $r=5$ and $n \geq 4$, then $U$ contains two elements of opposite sign, if $n$ is even, and of the same sign, if $n$ is odd.  If $r=3$, then $U$ contains four members each belonging to $\mathcal{L}_n^{(n)}$ for all $n \geq 3$, which implies the last statement. \end{proof}

Direct computations yield the following further expressions for the generating function of certain $r$-tribonacci determinants.

\begin{thm}\label{gth6}
If $r\geq 3$, then
\begin{equation}\label{gth6e1}
\sum_{n\geq1}\det(1;T_{0}^{(r)},T_{2}^{(r)},\ldots,T_{2n-2}^{(r)})x^n=\begin{cases}
{\displaystyle \frac{(-x)^{\frac{r+1}{2}}(-1-x)}{1+3x+x^2-(-x)^{\frac{r+1}{2}}-(-x)^{\frac{r+3}{2}}+x^r}}, &\emph{if}~r~\emph{is~odd};\\
{\displaystyle \frac{-(-x)^{\frac{r}{2}+1}}{1+3x+x^2-2(-x)^{\frac{r}{2}}+3(-x)^{\frac{r}{2}+1}+x^r}}, &\emph{if}~r~\emph{is~even},\end{cases}
\end{equation}

\begin{equation}\label{gth6e2}
\sum_{n\geq1}\det(-1;T_{0}^{(r)},T_{2}^{(r)},\ldots,T_{2n-2}^{(r)})x^n=\begin{cases}
{\displaystyle \frac{x^{\frac{r+1}{2}}(1-x)}{1-3x+x^2-3x^{\frac{r+1}{2}}+x^{\frac{r+3}{2}}-x^r}}, &\emph{if}~r~\emph{is~odd};\\
{\displaystyle \frac{x^{\frac{r}{2}+1}}{1-3x+x^2-2x^{\frac{r}{2}}+x^{\frac{r}{2}+1}+x^r}}, &\emph{if}~r~\emph{is~even},\end{cases}
\end{equation}
and
\begin{equation}\label{gth6e3}
\sum_{n\geq1}\det(1;T_{r+2}^{(r)},T_{r+3}^{(r)},\ldots,T_{n+r+1}^{(r)})x^n=\frac{3x-2x^2-(-x)^{r-2}-(-x)^{r-1}-2(-x)^r}{1-2x+x^2+(-x)^{r-2}+(-x)^{r-1}+(-x)^r}.
\end{equation}
\end{thm}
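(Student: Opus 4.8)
The plan is to prove all three identities by a single uniform generating-function computation, dispensing entirely with the tiling involutions of the previous theorems; this is exactly what the phrase ``direct computations'' preceding the statement suggests. The one engine I would use is the generating-function form of the expansion \eqref{Det-Hess}. Writing $A(x)=\sum_{i\ge1}a_ix^i$ for the sequence of first-column entries, setting $D(x)=\sum_{n\ge0}\det(M_n)x^n$, and putting $C(x)=\sum_{k\ge1}(-a_0)^{k-1}a_kx^k$, I would multiply \eqref{Det-Hess} by $x^n$ and sum over $n\ge1$ (with $\det(M_0)=1$) to get $D(x)-1=C(x)D(x)$, hence $D(x)=(1-C(x))^{-1}$. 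Specializing the superdiagonal and using $\sum_{k\ge1}(-1)^{k-1}a_kx^k=-A(-x)$, this gives
\[
\sum_{n\ge0}\det(1;a_1,\ldots,a_n)\,x^n=\frac{1}{1+A(-x)},\qquad
\sum_{n\ge0}\det(-1;a_1,\ldots,a_n)\,x^n=\frac{1}{1-A(x)}.
\]
Since each sum in the theorem begins at $n=1$, I would subtract the constant term $1$ at the very end. Everything thus reduces to identifying $A(x)$ for each entry sequence and simplifying.

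For \eqref{gth6e1} and \eqref{gth6e2} the entries are $a_i=T_{2i-2}^{(r)}$, the even-indexed $r$-tribonacci numbers. From the tiling description one has $\sum_{n\ge0}|\mathcal{T}_n^{(r)}|x^n=(1-x-x^2-x^r)^{-1}$ together with $|\mathcal{T}_n^{(r)}|=T_{n+r-1}^{(r)}$, whence $G(y):=\sum_{m\ge0}T_m^{(r)}y^m=y^{r-1}/(1-y-y^2-y^r)$. Then $A(x)=x\sum_{j\ge0}T_{2j}^{(r)}x^j$, and I would obtain the inner sum as the even part $\tfrac12\bigl(G(y)+G(-y)\bigr)$ evaluated at $y^2=x$. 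This is precisely the step that forces the split on the parity of $r$: in $G(-y)=(-y)^{r-1}/(1+y-y^2-(-y)^r)$ the signs of $(-y)^{r-1}$ and $(-y)^r$ depend on $r\bmod2$, producing two different closed forms for $A(x)$. Substituting each into the two reciprocal formulas above and clearing denominators should reproduce the odd- and even-$r$ cases of \eqref{gth6e1} and \eqref{gth6e2} verbatim.

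For \eqref{gth6e3} the entries $a_i=T_{r+1+i}^{(r)}$ are a plain shift, so $A(x)=x^{-(r+1)}\bigl(G(x)-T_{r-1}^{(r)}x^{r-1}-T_r^{(r)}x^r-T_{r+1}^{(r)}x^{r+1}\bigr)$, where the recurrence and initial data give $T_{r-1}^{(r)}=T_r^{(r)}=1$ and $T_{r+1}^{(r)}=2$. After simplification this collapses to the rational function $A(x)=(3x+2x^2+x^{r-2}+x^{r-1}+2x^r)/(1-x-x^2-x^r)$, and forming $\tfrac{1}{1+A(-x)}-1=-A(-x)/(1+A(-x))$ yields the stated generating function; the cancellations here are routine.

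The conceptual content is light; the real work, and the only place I expect friction, lies in the algebraic bookkeeping for \eqref{gth6e1} and \eqref{gth6e2}. The even-part extraction requires combining two rational functions over a common denominator of the form $(1-y^2)^2-(y+y^r)^2$ when $r$ is odd and $(1-y^2-y^r)^2-y^2$ when $r$ is even, and then tracking the signs of the half-integer powers $(-x)^{(r\pm1)/2}$, $(-x)^{r/2}$, and $(-x)^{r/2+1}$ through the reciprocal formula. None of this is deep, but it is sign-sensitive and must be carried out separately in each parity; I would organize it by first recording $A(x)$ in lowest terms in each case and only then substituting into the boxed identities.
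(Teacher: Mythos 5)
Your proposal is correct and matches the paper's intent exactly: the paper offers no written proof beyond the phrase ``direct computations,'' and your computation—deriving $D(x)=1/(1-C(x))$ from the recurrence \eqref{Det-Hess}, extracting the even part $\tfrac12\bigl(G(y)+G(-y)\bigr)$ of $G(y)=y^{r-1}/(1-y-y^2-y^r)$ for \eqref{gth6e1}--\eqref{gth6e2}, and using the shifted series with $T_{r-1}^{(r)}=T_r^{(r)}=1$, $T_{r+1}^{(r)}=2$ for \eqref{gth6e3}—is precisely such a computation. I checked the algebra in all three cases (both parities of $r$) and it reproduces the stated rational functions verbatim.
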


When $r=3$ in \eqref{gth6e1}, one gets $\sum_{n\geq1}\det(1;T_0,T_2,\ldots,T_{2n-2})x^n=-\frac{x^2(1+x)}{1+3x+2x^3}$ leading to the closed form expression $\det(1;T_0,T_2,\ldots,T_{2n-2})=(-1)^{n-1}(a_{n-2}-a_{n-3})$ for $n\geq 3$ where
$$a_m=\sum_{i=0}^{\left\lfloor\frac{m}{3}\right\rfloor}\binom{m-2i}{i}2^i3^{m-3i}, \qquad m \geq 0,$$
which was not obtained previously.  If $r=3$ in \eqref{gth6e2}, one gets $$\sum_{n\geq1}\det(-1;T_0,T_2,\ldots,T_{2n-2})x^n=\frac{x^2(1-x)}{1-3x-2x^2},$$
which implies the prior explicit formula in this case.  Finally, when $r=3$ in \eqref{gth6e3}, we have $$\sum_{n\geq1}\det(1;T_5,T_6,\ldots,T_{n+4})x^n=\frac{4x-3x^2+2x^3}{1-3x+2x^2-x^3},$$
which leads to the explicit formula for $\det(1;T_5,T_6,\ldots,T_{n+4})$ stated in Theorem \ref{Theorem3} above.

For our next result, we generalize a previous tribonacci determinant identity in terms of a different extension of $T_n$.  Let $S_n^{(r)}$ for $r \geq 3$ odd be defined recursively by $S_n^{(r)}=S_{n-1}^{(r)}+S_{n-\frac{r+1}{2}}^{(r)}+S_{n-r}^{(r)}$ for $n \geq r$, with $S_0^{(r)}=\cdots=S_{r-2}^{(r)}=0$ and $S_{r-1}^{(r)}=1$.  Let $\mathcal{S}_{n}^{(r)}$ denote the set of tilings of length $n$ using pieces of size $1$, $\frac{r+1}{2}$, or $r$.  Note that $S_n^{(r)}=|\mathcal{S}_{n-r+1}^{(r)}|$ for $n \geq r-1$ and that $S_n^{(r)}$ reduces to $T_n$ when $r=3$.

We have the following determinant identity involving $S_n^{(r)}$.
\begin{thm}\label{dgth}
If $r \geq 3$ is odd, then
\begin{equation}
\det\Big(-1;S_{\frac{r-1}{2}}^{(r)},S_{\frac{r+1}{2}}^{(r)},\ldots,S_{n+\frac{r-3}{2}}^{(r)}\Big)=\sum_{i=0}^{\left\lfloor\frac{2n-r-1}{r}\right\rfloor}\binom{2n-r-1-(r-1)i}{i}, \qquad n \geq \small \frac{r+1}{2}\normalsize.
\end{equation}
\end{thm}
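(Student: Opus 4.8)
The plan is to exploit the fact that, since here $a_0=-1$, the determinant $D_n:=\det(-1;S_{(r-1)/2}^{(r)},\ldots,S_{n+(r-3)/2}^{(r)})$ is a \emph{positively} weighted sum over the compositions $\rho=(x_1,\ldots,x_k)$ of $n$, where a part $x$ carries weight $a_x:=S_{x+(r-3)/2}^{(r)}$ (as explained in the discussion preceding Theorem \ref{gth1}). Writing $h=\frac{r+1}{2}$, the initial conditions $S_m^{(r)}=0$ for $m\le r-2$ force every part to satisfy $x\ge h$, and for $x\ge h$ the identity $S_m^{(r)}=|\mathcal{S}_{m-r+1}^{(r)}|$ gives $a_x=|\mathcal{S}_{x-h}^{(r)}|$, the number of tilings of length $x-h$ by pieces of size $1$, $h$, and $2h-1=r$. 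First I would record the ordinary generating function of $|\mathcal{S}_\ell^{(r)}|$, namely $S(x)=\frac{1}{1-x-x^h-x^{2h-1}}$, and then, summing the weighted compositions, obtain
$$\sum_{n\ge0}D_nx^n=\frac{1}{1-x^hS(x)}=\frac{1-x-x^h-x^{2h-1}}{1-x-2x^h-x^{2h-1}}=1+\frac{x^h}{1-x-2x^h-x^{2h-1}}.$$

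Next I would identify the remaining rational function as an even-part generating function. Let $a_m^{(r)}$ count square-and-$r$-mino tilings of length $m$, so that $\sum_m a_m^{(r)}x^m=\frac{1}{1-x-x^r}$ with $r=2h-1$. Splitting the recurrence $a_m^{(r)}=a_{m-1}^{(r)}+a_{m-r}^{(r)}$ according to the parity of $m$ and setting $E(x)=\sum_{n\ge0}a_{2n}^{(r)}x^n$ and $O(x)=\sum_{n\ge0}a_{2n+1}^{(r)}x^n$, I would derive $E=1+(x+x^h)O$ and $O=(1+x^{h-1})E$ by removing the last tile (a square or an $r$-mino) and tracking how its length changes the parity of the remainder; eliminating $O$ then yields $E(x)=\frac{1}{1-x-2x^h-x^{2h-1}}$. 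Comparing with the previous display gives $\sum_{n\ge0}D_nx^n=1+x^hE(x)$, whence $D_n=a_{2(n-h)}^{(r)}=a_{2n-r-1}^{(r)}$ for every $n\ge1$ (the constant $1$ accounting only for the empty $n=0$ term).

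Finally I would evaluate $a_{2n-r-1}^{(r)}$ directly: a square-and-$r$-mino tiling of length $2n-r-1$ with exactly $i$ $r$-minos has $2n-r-1-ri$ squares, hence $2n-r-1-(r-1)i$ tiles in all, and so is counted by $\binom{2n-r-1-(r-1)i}{i}$; summing over $0\le i\le\lfloor(2n-r-1)/r\rfloor$ reproduces the right-hand side. I expect the main obstacle to be the bookkeeping in the even/odd split of the second step, in particular keeping the index shifts and the boundary conventions $a_m^{(r)}=0$ for $m<0$ and $a_0^{(r)}=1$ consistent so that the two rational expressions match exactly rather than up to a spurious low-order term. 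A fully bijective alternative is available: one interprets $E$ and $O$ as generating even- and odd-length square-and-$r$-mino tilings and matches the sequences of $\mathcal{S}$-tilings counted by $D_n$ (separated by marked $h$-blocks) with even-length square-and-$r$-mino tilings; however, unfolding the mutual recurrences into an explicit map is more delicate than the generating-function computation, so I would present the latter.
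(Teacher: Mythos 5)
Your proof is correct, and it reaches the paper's conclusion by a genuinely different middle step. Both arguments start identically: since $a_0=-1$, the determinant is a positively weighted sum over compositions of $n$ whose parts $x$ carry weight $S_{x+(r-3)/2}^{(r)}$, the initial conditions force every part to satisfy $x\geq h$ where $h=\frac{r+1}{2}$, and the weight of such a part is $|\mathcal{S}_{x-h}^{(r)}|$; and both arguments end by counting square-and-$r$-mino tilings of length $2n-r-1$ according to the number $i$ of $r$-minos, which gives the binomial sum. The difference is how one gets from the composition model to those tilings. The paper does it purely bijectively: it encodes a weighted composition as a tiling of length $n-h$ by pieces of size $1$, $h$, $r$ in which $h$-pieces may be marked, then observes that a square-and-$r$-mino tiling of the even length $2n-r-1$ must contain an even number of tiles (all tiles have odd length since $r$ is odd), and pairs consecutive tiles via $ss\mapsto s$, $rr\mapsto r$, $sr\mapsto h$, $rs\mapsto$ marked $h$, which halves lengths and lands exactly on the marked tilings. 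You instead run a generating-function computation: $\sum_{n\geq0}D_nx^n=\bigl(1-x^hS(x)\bigr)^{-1}=1+x^hE(x)$, where $E$ is the even-index section of $1/(1-x-x^r)$, extracted by parity-splitting the recurrence $a_m^{(r)}=a_{m-1}^{(r)}+a_{m-r}^{(r)}$ into the mutual relations $E=1+(x+x^h)O$ and $O=(1+x^{h-1})E$ (all of which I have checked, including the index bookkeeping you flagged as the main risk). Your parity-split relations are essentially the algebraic shadow of the paper's tile-pairing bijection, so the two proofs explain the same doubling phenomenon $n\mapsto 2n-r-1$; the paper's version buys an explicit combinatorial correspondence with no algebra at all, while yours is more mechanical and does not require inventing the pairing trick--and, as a small bonus, your identity $\sum_{n\geq0}D_nx^n=1+x^hE(x)$ also records that the determinant vanishes for $1\leq n<h$, which the theorem statement does not address.
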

\begin{proof}
Let $\mathcal{P}_{n,k}$ denote the set of sequences of tilings $\lambda=(\lambda_1,\ldots,\lambda_k)$ such that $\lambda_i$ uses $s$, $\frac{r+1}{2}$, or $r$ pieces and has length $\ell_i$, where $\ell_1+\cdots+\ell_k=n-\left(\frac{r+1}{2}\right)k$ and $\ell_i \geq 0$ for all $i$.  Let $\mathcal{P}_n=\cup_{k=1}^n\mathcal{P}_n^{(k)}$ and note that the expansion of $\det\big(-1;S_{\frac{r-1}{2}}^{(r)},\ldots,S_{n+\frac{r-3}{2}}^{(r)}\big)$ gives $|\mathcal{P}_n|$, by the definitions.  Let $\mathcal{M}_n=\mathcal{M}_n^{(r)}$ denote the set of marked members of $\mathcal{S}_n^{(r)}$ wherein pieces of length $\frac{r+1}{2}$  may be marked and ending in such a marked piece.  Given $\lambda=(\lambda_1,\lambda_2,\ldots) \in \mathcal{P}_n$, let $\lambda' \in \mathcal{M}_n$ be obtained from $\lambda$ by concatenating the $\lambda_i$ and inserting a marked tile of length $\frac{r+1}{2}$ directly after each $\lambda_i$, including the last.  The mapping $\lambda \mapsto \lambda'$ is a bijection and thus $|\mathcal{M}_n|=|\mathcal{P}_n|$.

Since members of $\mathcal{M}_n$ are synonymous with tilings in $\mathcal{S}_{n-\frac{r+1}{2}}^{(r)}$ in which the $\frac{r+1}{2}$ tiles may be marked, the set $\mathcal{M}_n$ has the same cardinality as the subset of $\mathcal{T}_{2n-r-1}^{(r)}$ whose members contain no dominos, which we will denote by $\widetilde{\mathcal{T}}_{2n-r-1}^{(r)}$.  To realize this, first note that $r$ odd implies that members of $\widetilde{\mathcal{T}}_{2n-r-1}^{(r)}$ must contain an even number of tiles altogether.  Thus, one may group each adjacent pair of consecutive tiles and make the following replacements:  (i) $ss$ by $s$, (ii) $rr$ by $r$, (iii) $sr$ by $\frac{r+1}{2}$, (iv) $rs$ by $\left(\frac{r+1}{2}\right)'$ (indicating a marked tile of length $\frac{r+1}{2}$).  This yields a bijection between $\widetilde{\mathcal{T}}_{2n-r-1}^{(r)}$ and $\mathcal{M}_n$ and thus $\det\big(-1;S_{\frac{r-1}{2}}^{(r)},\ldots,S_{n+\frac{r-3}{2}}\big)=|\widetilde{\mathcal{T}}_{2n-r-1}^{(r)}|$, which implies the result.
\end{proof}

To extend the first formula in Theorem \ref{Theorem3} above, we need to consider the generalized Fibonacci numbers $F_n^{(r)}$ defined recursively by $F_n^{(r)}=F_{n-1}^{(r)}+F_{n-2}^{(r)}+\cdots+F_{n-r}^{(r)}$ for $n \geq r$, with $F_0^{(r)}=F_1^{(r)}=\cdots=F_{r-2}^{(r)}=0$ and $F_{r-1}^{(r)}=1$.  See \cite{Dresden} and \cite[Section~3.4]{BQ}, where they appear in a reparameterized form.  Note that for $n \geq r$, the number $F_n^{(r)}$ counts tilings of length $n-r+1$ where any piece of length up to $r$ may be used.  $F_n^{(r)}$ coincides with $F_{n}$ when $r=2$ and with $T_n$ when $r=3$.  We have the following determinant formula involving $F_n^{(r)}$.
\begin{thm}\label{gfibth}
If $r \geq 2$, then
\begin{equation}\label{gfibthe1}
\det(-1;F_0^{(r)},F_1^{(r)},\ldots,F_{n-1}^{(r)})=\left\lfloor\frac{2^n+2^r-2}{2^{r+1}-2}\right\rfloor, \qquad n \geq 1.
\end{equation}
\end{thm}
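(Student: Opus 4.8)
The plan is to interpret the determinant combinatorially, as in the proofs of Theorems~\ref{gth1}--\ref{gth4}, and then extract a closed form via a generating function. First I would observe that with $a_i:=F_{i-1}^{(r)}$ one has $a_i=0$ for $1\le i\le r-1$ and $a_r=F_{r-1}^{(r)}=1$, so under the $a_0=-1$ convention every term in the expansion is positive and $\det(-1;F_0^{(r)},\ldots,F_{n-1}^{(r)})$ equals the sum, over all compositions of $n$ into parts of size at least $r$, of $\prod_j F_{x_j-1}^{(r)}$. Since $F_{x-1}^{(r)}$ counts tilings of length $x-r$ using pieces of length at most $r$, I would encode a part $x$ as such an (unmarked) tiling followed by a marked $r$-piece. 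Concatenating across parts shows that the determinant counts tilings of length $n$ using pieces of length $1,\ldots,r$ in which $r$-pieces may be marked or unmarked and whose final piece is a marked $r$-piece, the marked $r$-pieces being exactly the part separators. Stripping the terminal marked $r$-piece then gives $\det(-1;F_0^{(r)},\ldots,F_{n-1}^{(r)})=H_{n-r}$, where $H_m$ denotes the number of tilings of length $m$ with pieces of length at most $r$ and $r$-pieces two-colored (with $H_m=0$ for $m<0$ and $H_0=1$). In particular the determinant is $0$ for $1\le n\le r-1$, which matches the floor expression directly, so I may assume $n\ge r$.

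Next I would compute the generating function of $H_m$. A single allowed piece has generating function $x+x^2+\cdots+x^{r-1}+2x^r$, so $H(x)=\sum_{m\ge0}H_mx^m=\bigl(1-(x+\cdots+x^{r-1}+2x^r)\bigr)^{-1}$. The pleasant point is that the denominator factors: $1-(x+\cdots+x^{r-1})-2x^r=\tfrac{(1-x)-(x-x^r)-2x^r(1-x)}{1-x}=\tfrac{(1-2x)(1-x^r)}{1-x}$, whence $H(x)=\dfrac{1-x}{(1-2x)(1-x^r)}$. A partial-fraction decomposition $H(x)=\dfrac{A}{1-2x}+\dfrac{B(x)}{1-x^r}$ with $\deg B<r$ gives $A=\dfrac{2^{r-1}}{2^r-1}$ (evaluate the residue at $x=\tfrac12$), and the division $B(x)=\bigl((1-x)-A(1-x^r)\bigr)/(1-2x)$ yields the coefficients $b_0=\dfrac{2^{r-1}-1}{2^r-1}$ and $b_j=\dfrac{-2^{j-1}}{2^r-1}$ for $1\le j\le r-1$. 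Reading off coefficients produces, for $m\ge0$,
$$H_m=\frac{2^{m+r-1}}{2^r-1}+b_{m\bmod r}.$$

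Finally I would match this to the floor. Setting $m=n-r$ and canceling a factor of $2$, the claimed right-hand side becomes $\bigl\lfloor\tfrac{2^{m+r-1}+2^{r-1}-1}{2^r-1}\bigr\rfloor$. Using $2^r\equiv1\pmod{2^r-1}$ one finds $2^{m+r-1}\equiv2^{r-1}$ when $m\equiv0\pmod r$ and $2^{m+r-1}\equiv2^{j-1}$ when $m\equiv j\pmod r$ with $1\le j\le r-1$; this makes the integrality of $H_m$ transparent and shows that $H_m=\tfrac{2^{m+r-1}+2^{r-1}-1}{2^r-1}$ exactly in the first case (so the floor is inert) and $H_m=\tfrac{2^{m+r-1}-2^{j-1}}{2^r-1}$ in the second. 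In the second case the floor-argument exceeds $H_m$ by $\tfrac{2^{j-1}+2^{r-1}-1}{2^r-1}$, and the estimates $2^{r-1}\le 2^{j-1}+2^{r-1}-1\le 3\cdot2^{r-2}-1<2^r-1$ place this leftover strictly in $(0,1)$, so the floor returns $H_m$ here as well. I expect the main obstacle to be precisely this last bookkeeping: reconciling the period-$r$ correction $b_{m\bmod r}$ with the floor requires the residue computation of $2^{m+r-1}\bmod(2^r-1)$ together with the tight bound that the residual fraction never reaches $1$, whereas the combinatorial reduction and the generating-function step are routine given the machinery already developed in this section.
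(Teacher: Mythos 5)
Your proposal is correct, and its skeleton matches the paper's proof: the same combinatorial reduction (your $H_{n-r}$ is exactly the paper's $u_n$, the number of tilings of length $n-r$ with pieces of length at most $r$ and two-colored $r$-minos), the same generating function $\frac{x^r(1-x)}{(1-2x)(1-x^r)}$ for the determinant sequence, the same coefficient formulas split by $n \bmod r$, and the same final reconciliation with the floor. Where you genuinely diverge is the middle step. The paper reaches the generating function indirectly: it first proves, via a ``near bijection'' $f$ between tilings not ending in a square and tilings one unit shorter, the recurrence $u_n=2u_{n-1}+\varepsilon_n$ with $\varepsilon_n=+1$ when $r\mid n$, $\varepsilon_n=-1$ when $n\equiv 1 \pmod r$, and $\varepsilon_n=0$ otherwise, and only then sums the recurrence to get the rational function. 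You instead write the generating function down immediately from the sequence-of-pieces decomposition, $H(x)=\bigl(1-(x+\cdots+x^{r-1}+2x^r)\bigr)^{-1}$, observe the factorization of the denominator as $(1-2x)(1-x^r)/(1-x)$, and do partial fractions; your coefficients $b_0=\frac{2^{r-1}-1}{2^r-1}$, $b_j=\frac{-2^{j-1}}{2^r-1}$ and the resulting closed forms for $H_m$ agree with the paper's two-case formula for $u_n$. Your route is shorter and essentially automatic once the combinatorial model is in place, at the cost of the bijective content: the paper's near bijection explains combinatorially \emph{why} the sequence doubles with a periodic $\pm 1$ correction, which is the kind of argument this section of the paper is explicitly cultivating. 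Both endgames handle the floor the same way, and your bounds ($0<\frac{2^{j-1}+2^{r-1}-1}{2^r-1}<1$ for $1\le j\le r-1$, including the edge case $r=2$) are tight and correct.
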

\begin{proof}
Given $n \geq r$, let $\mathcal{U}_n=\mathcal{U}_n^{(r)}$ denote the set of tilings of length $n-r$ where pieces of any length up to $r$ may be used and $r$-mino pieces may be marked.  Let $u_n=|\mathcal{U}_n|$ for $n \geq r$, with $u_1=\cdots=u_{r-1}=0$.  By similar reasoning as before, we have that $u_n=\det(-1;F_0^{(r)},\ldots,F_{n-1}^{(r)})$ for all $n \geq 1$.  We first show
\begin{equation}\label{gfibthe2}
u_n=2u_{n-1}+\frac{(-1)^{n+(r-1)\left\lfloor n/r\right\rfloor}}{2}\left((-1)^{\lfloor n/r \rfloor}+(-1)^{\lfloor (n+r-2)/r \rfloor}\right), \qquad n \geq 2.
\end{equation}
Note that \eqref{gfibthe2} is seen to hold for $2 \leq n \leq r+1$, by the stipulated initial values and since $u_r=u_{r+1}=1$, so we may assume $n \geq r+2$.  There are clearly $u_{n-1}$ members of $\mathcal{U}_n$ that end in $s$, so let $\mathcal{U}_n^*$ denote the subset of $\mathcal{U}_n$ whose members do not end in $s$.  To complete the proof of \eqref{gfibthe2}, we define a ``near'' bijection $f$ between $\mathcal{U}_n^*$ and $\mathcal{U}_{n-1}$.

Let $\lambda \in \mathcal{U}_n^*$.  Assume that $\lambda$ contains at least one kind of piece other than a marked $r$-mino and let $z$ denote the rightmost such piece.  If $z$ is a square, then $\lambda \in \mathcal{U}_n^*$ implies $z$ must be followed by a marked $r$-mino.  In this case, we delete $z$ and remove the mark from the marked $r$-mino that directly follows $z$ to obtain $f(\lambda)$.  If $z$ is not a square, then we shorten $z$ by one unit to obtain $f(\lambda)$.  To reverse $f$, consider the position of the rightmost piece that is not a marked $r$-mino and either lengthen it by one unit if it is not an unmarked $r$-mino or change to a marked $r$-mino and insert a square directly prior if it is.  Note that if $n\not\equiv 0,1\text{ (mod
$r$)}$, then $f$ is in fact a bijection between $\mathcal{U}_n^*$ and $\mathcal{U}_{n-1}$.  If $n\equiv 0\text{ (mod $r$)}$, then $f$ fails to be defined for the tiling that consists of a sequence of marked $r$-minos, whence $|\mathcal{U}_n^*|=|\mathcal{U}_{n-1}|+1$ in this case.  If $n\equiv 1\text{ (mod $r$)}$, then $f^{-1}$ is not defined for the same type of tiling of length $n-1$, whence $|\mathcal{U}_n^*|=|\mathcal{U}_{n-1}|-1$.
Therefore, we have $u_n=2u_{n-1}+1$ if $n\equiv 0\text{ (mod $r$)}$, $u_n=2u_{n-1}-1$ if $n\equiv 1\text{ (mod $r$)}$ and $u_n=2u_{n-1}$ otherwise.  Combining these various cases gives recurrence \eqref{gfibthe2}.

We now compute the generating function of $u_n$.  Let $f(x)=\sum_{n\geq 1}u_nx^n$.  Multiplying both sides of \eqref{gfibthe2} by $x^n$, summing over $n \geq 2$, and considering cases mod $r$ for $n$ yields
$$f(x)=\frac{x^r(1-x)}{(1-2x)(1-x^r)}=\left(1+\sum_{i\geq 1}2^{i-1}x^i\right)\left(\sum_{i\geq 1}x^{ri}\right).$$
Computing the coefficient of $x^n$ in this convolution gives
$$u_n=\begin{cases}
{\displaystyle 1+2^{r-1}+2^{2r-1}+\cdots+2^{(m-1)r-1}}, &\text{if $n=rm$};\\
{\displaystyle 2^{q-1}+2^{r+q-1}+\cdots+2^{(m-1)r+q-1}}, &\text{if $n=rm+q$, with $1\leq q\leq r-1$}.\end{cases}
$$
Thus, we have for all $n \geq 1$,
$$u_n=\begin{cases}
{\displaystyle \frac{2^{rm-1}+2^{r-1}-1}{2^r-1}}, &\text{if $n=rm$};\\[6pt]
{\displaystyle 2^{q-1}\left(\frac{2^{rm}-1}{2^r-1}\right)}, &\text{if $n=rm+q$, with $1\leq q\leq r-1$}.\end{cases}
$$

The case of \eqref{gfibthe1} when $n$ is divisible by $r$ now follows immediately from the first case of the last formula. On the other hand, if $n=rm+q$, then $2^{r+1} \equiv 2$ \text{(mod $(2^{r+1}-2)$)} implies
$$2^n=2^{rm+q}\equiv 2^q \text{~(mod $(2^{r+1}-2)$)},$$
and thus
$$2^{q-1}\left(\frac{2^{rm}-1}{2^r-1}\right)=\frac{2^n-2^q}{2^{r+1}-2}=\left\lfloor\frac{2^n}{2^{r+1}-2}\right\rfloor=\left\lfloor\frac{2^n+2^r-2}{2^{r+1}-2}\right\rfloor,$$
as $2^q+2^r-2<2^{r+1}-2$ since $1 \leq q \leq r-1$.  This yields formula \eqref{gfibthe1} in the case when $n$ is not divisible by $r$, which completes the proof.
\end{proof}

Note that the $r=2$ case of \eqref{gfibthe1} gives $\det(-1;F_0,F_1,\ldots,F_{n-1})=\left\lfloor \frac{2^n+2}{6}\right\rfloor$ for $n \geq 1$, with \eqref{gfibthe1} reducing to the first formula in Theorem \ref{Theorem3} when $r=3$.\medskip

\noindent\emph{Remark:} The arguments used to establish \eqref{gth1e1} and \eqref{gth1e3} above show further for $r \geq 2$ that
\begin{equation}
\det(1;F_0^{(r)},F_1^{(r)},\ldots,F_{n-1}^{(r)})=(-1)^{n-1}F_{n-2}^{(r-1)}, \qquad n \geq r-1,
\end{equation}
and
\begin{equation}
\det(1;F_{r-1}^{(r)},F_r^{(r)},\ldots,F_{n+r-2}^{(r)})=(-1)^{n-1}Q_{n+r-1}^{(r)}, \qquad n \geq 1,
\end{equation}
where $Q_n^{(r)}=Q_{n-2}^{(r)}+Q_{n-3}^{(r)}+\cdots+Q_{n-r}^{(r)}$ for $n \geq r$, with $Q_0^{(r)}=1$ and $Q_1^{(r)}=\cdots=Q_{r-1}^{(r)}=0$.  Note that $Q_n^{(r)}$ reduces to $P_n$ when $r=3$.

We still seek a combinatorial proof of the penultimate identity in Theorem \ref{Theorem3} above.  We conclude by providing a bijective proof of the underlying recurrence for the second identity in Theorem \ref{Theorem3}.

\begin{thm}
The sequence $a_n=\det(-1;T_0,T_2,\ldots,T_{2n-2})$ satisfies the recurrence $a_n=3a_{n-1}+2a_{n-2}$ for $n \geq 4$, with $a_2=1$ and $a_3=2$.
\end{thm}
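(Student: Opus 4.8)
The plan is to prove the recurrence bijectively by introducing a model for $a_n$ built from colored tilings and then examining the final tile. Concretely, let $\mathcal{X}_n$ denote the set of tilings of a board of length $n-2$ that use squares, each colored in one of three colors, and dominos, each colored in one of two colors, subject to the single restriction that the leftmost tile is \emph{not} a square of the third color. I would first show that $|\mathcal{X}_n|=a_n$ for all $n \geq 2$, and then establish the recurrence by a deletion bijection applied to the rightmost tile, so that the ``$3$'' and ``$2$'' arise as the numbers of available square and domino colors, respectively.

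For the identification $|\mathcal{X}_n|=a_n$, let $b_k$ denote the number of tilings of length $k$ using three colors of squares and two colors of dominos, so that $\sum_{k \geq 0}b_k x^k=\frac{1}{1-3x-2x^2}$. Removing from the count of all such tilings of length $n-2$ those that begin with a third-color square (of which there are $b_{n-3}$, since the remainder is an arbitrary colored tiling of length $n-3$) gives $|\mathcal{X}_n|=b_{n-2}-b_{n-3}$, whence $\sum_{n \geq 2}|\mathcal{X}_n|x^n=(x^2-x^3)\sum_{k \geq 0}b_k x^k=\frac{x^2(1-x)}{1-3x-2x^2}$, which is precisely the generating function already found above for $\det(-1;T_0,T_2,\ldots,T_{2n-2})$. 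Thus $|\mathcal{X}_n|=a_n$. The initial values then read as $|\mathcal{X}_2|=1$ (the empty tiling) and $|\mathcal{X}_3|=2$ (a single square of the first or second color), and these also follow at once from the $2\times 2$ and $3\times 3$ determinants.

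With the model in hand, I would prove the recurrence by conditioning, for $n \geq 4$, on the rightmost tile of a member of $\mathcal{X}_n$, which is a tiling of length $n-2 \geq 2$. If this tile is a square, there are three color choices for it, and deleting it leaves a tiling of length $n-3$ whose leftmost tile is unchanged and therefore still not a third-color square; this yields a bijection between the square-terminating members of $\mathcal{X}_n$ and $\{1,2,3\}\times \mathcal{X}_{n-1}$, contributing $3a_{n-1}$. If the rightmost tile is a domino, there are two color choices, and deleting it leaves a tiling of length $n-4$ that again satisfies the leftmost-tile condition (the empty tiling when $n=4$), giving a bijection onto $\{1,2\}\times \mathcal{X}_{n-2}$ and contributing $2a_{n-2}$. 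Summing the two cases yields $a_n=3a_{n-1}+2a_{n-2}$.

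The main point to get right is the choice of model: placing the color restriction at the left end, opposite to the end where the deletion acts, is exactly what makes the restriction invariant under removing the rightmost tile, and hence what makes the bijection clean. The only genuine subtlety is the boundary behavior at $n=4$, where deleting a domino produces the empty tiling of $\mathcal{X}_2$, together with the hypothesis $n \geq 4$, which guarantees a board of length at least $2$ so that a rightmost tile always exists and its removal leaves the leftmost-tile condition meaningful. I expect no obstacle beyond verifying these edge cases and the two base values; the bijective content of the argument resides entirely in the last-tile deletion, the generating function serving only to pin down $|\mathcal{X}_n|=a_n$.
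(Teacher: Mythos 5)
Your proof is correct --- the identification $|\mathcal{X}_n|=b_{n-2}-b_{n-3}$, the generating function computation, and the last-tile deletion bijection (including the $n=4$ edge case) all check out --- but it takes a genuinely different route from the paper's, and one that largely defeats the purpose of the theorem. You pin down $|\mathcal{X}_n|=a_n$ by matching generating functions against the $r=3$ case of \eqref{gth6e2}; but once that generating function is granted, the theorem is already finished: multiplying $\sum_{n\geq1}a_nx^n=\frac{x^2(1-x)}{1-3x-2x^2}$ through by $1-3x-2x^2$ and comparing coefficients gives $a_n-3a_{n-1}-2a_{n-2}=0$ for $n\geq4$ immediately, with the coefficients of $x^2$ and $x^3$ giving $a_2=1$ and $a_3=2$. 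So your colored-tiling model and deletion argument, while valid, carry no additional logical weight; they form a bijective proof about $\mathcal{X}_n$, an auxiliary object tied to the determinant only through the generating function --- and \eqref{gth6e2} itself is only asserted in Theorem \ref{gth6} via ``direct computations.'' The paper's proof runs in the opposite direction and never touches the generating function: using the composition expansion of Toeplitz--Hessenberg determinants with $a_0=-1$ developed in Section 5, it interprets $\det(-1;T_0,T_2,\ldots,T_{2n-2})$ directly as the number of tilings of length $2n-4$ over $\{s,d,t,r\}$ (with $r$ a $4$-mino) in which every $4$-mino ends at an even position, and then obtains the recurrence by classifying endings --- $d$ or $ss$ contribute $2a_{n-1}$, $r$ or $ts$ contribute $2a_{n-2}$ --- and constructing an explicit six-case bijection showing that the tilings ending in $ds$ or $t$ number $a_{n-1}$. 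Since the sentence preceding the theorem announces ``a bijective proof of the underlying recurrence'' for the second identity of Theorem \ref{Theorem3}, the paper's version supplies exactly that self-contained combinatorial content, whereas yours buys brevity at the cost of being, at bottom, a generating-function proof with a decorative bijection attached.
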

\begin{proof} Let $r$ stand here for a $4$-mino and let us refer to a $4$-mino as \emph{even} if its final section corresponds to an even-numbered position.  Let $\mathcal{V}_n$ denote the set of tilings of length $2n$ that use pieces from $\{s,d,t,r\}$ and end in $r$, where all $r$ pieces are even.  Since $T_{2i-2}$ enumerates all tribonacci tilings of length $2i-4$, it is seen upon considering the number of $4$-minos that $\det(-1;T_0,\ldots,T_{2n-2})$ gives the cardinality of $\mathcal{V}_n$.  Members of $\mathcal{V}_n$ may be regarded as tilings of length $2n-4$ which use $\{s,d,t,r\}$ such that all $r$ are even. Let $a_n=|\mathcal{V}_n|$ for $n \geq 2$.  We establish the recurrence for $a_n$ where $n \geq 4$, the initial conditions being easily verified. First note that there are clearly $2a_{n-1}$ members of $\mathcal{V}_n$ ending in $d$ or $ss$, and $2a_{n-2}$ that end in $r$ or $ts$.  Since tilings in $\mathcal{V}_n$ cannot end in $rs$, to complete the proof, we must show that there are $a_{n-1}$ tilings  that end in $ds$ or $t$.  Consider replacing, within members of $\mathcal{V}_n$ that end in $t$, the final $t$ with $s$.  Let $\widetilde{\mathcal{V}}_n$ and $\mathcal{V}_n^*$  denote the subsets of $\mathcal{V}_n$ whose members end in $ds$ or do not end in $s$, respectively.  Then we can complete the proof by defining a bijection between $\widetilde{\mathcal{V}}_n$ and $\mathcal{V}_{n-1}^*$.

In order to do so, first note that $\lambda \in \widetilde{\mathcal{V}}_n$ implies $\lambda=\alpha sd^is$ or $\lambda=\alpha td^is$, where $i \geq 1$ and $\alpha$ is possibly empty.  Observe no other forms for $\lambda$ are possible since an $r$ cannot appear between the rightmost two tiles of odd length within any member of $\mathcal{V}_n$.  To define the bijection, we treat separately the $i=1$, $i=2$, and $i\geq3$ cases as follows:
\begin{itemize}
\item $\lambda=\alpha sds \rightarrow \lambda'=\alpha d$,
\item $\lambda=\alpha tds \rightarrow \lambda'=\alpha st$,
\item $\lambda=\alpha sd^2s \rightarrow \lambda'=\alpha r$,
\item $\lambda=\alpha td^2s \rightarrow \lambda'=\alpha sdt$,
\item $\lambda=\alpha sd^is,~i \geq 3~ \rightarrow \lambda'=\alpha td^{i-3}t$,
\item $\lambda=\alpha td^is,~i \geq 3~ \rightarrow \lambda'=\alpha sd^{i-1}t$.
\end{itemize}
Considering the various cases, one may verify that the mapping $\lambda \mapsto \lambda'$ furnishes the desired bijection between $\widetilde{\mathcal{V}}_n$ and $\mathcal{V}_{n-1}^*$.
\end{proof}


\begin{thebibliography}{20}

\bibitem{Agronomof}
M.~Agronomof,
Sur une suite r\'ecurrente, \emph{Mathesis},  {\bf4} (1914), 125--126.
	
\bibitem{Anan}
P.~Anantakitpaisal and K.~Kuhapatanakul,
Reciprocal sums of the tribonacci numbers, {\em J. Integer Seq.},
 {\bf19} (2016), Art. 16.2.1.

\bibitem{Barry}
P.~Barry,
On integer-sequence-based constructions of generalized Pascal triangles, {\em J. Integer Seq.},  {\bf1} (2006), Art. 06.2.4.

\bibitem{BQ}
A.~T.~Benjamin and J.~J.~Quinn, {\em Proofs that Really Count: The Art of Combinatorial Proof}, Mathematical Association of America, Washington, DC, 2003.

\bibitem{Cereceda}
J. L.~Cereceda,
Determinantal representations for generalized Fibonacci and tribonacci numbers,
{\em Internat. J. Contemp. Math. Sci.},  {\bf9} no. 6 (2014), 269--285.

\bibitem{Choi1} E.~Choi,
Modular tribonacci numbers by matrix method, {\em J. Korean Soc. Math. Educ. Ser. B Pure Appl. Math.},  {\bf20} no. 3 (2013), 207--221.
	
\bibitem{Choi2} E.~Choi and J.~Jo,
Identities involving tribonacci numbers, {\em J. Chungcheong Math. Soc.},  {\bf28} no. 1 (2015), 39--51.
	
\bibitem{Dresden} G.~P.~B.~Dresden and Z.~Du,
A simplified Binet formula for $k$-generalized Fibonacci numbers, {\em J. Integer Seq.},
 {\bf17} (2014), Art. 14.4.7.
	
\bibitem{Feinberg} M.~Feinberg,
Fibonacci-Tribonacci, {\em Fibonacci Quart.},  {\bf1} no. 3 (1963), 71--74.
		
\bibitem{Feng} J.~Feng,
Hessenberg matrices on Fibonacci and Tribonacci numbers, {\em Ars Combin.},  {\bf127} (2011), 117--124.
	
\bibitem{Goy} T.~Goy, Some tribonacci identities using Toeplitz-Hessenberg determinants,
Proc. 18th Int. Scientific M. Kravchuk Conf.,  {\bf1} (2017), 159--161.

\bibitem{GSh} T.~Goy and M.~Shattuck, Determinant formulas of some Toeplitz-Hessenberg matrices with Catalan entries, {\em Proc. Math. Sci.}, {\bf129} (2019), Art. \#46.

\bibitem{GSh2} T.~Goy and M.~Shattuck, Fibonacci and Lucas identities from Toeplitz-Hessenberg matrices,
    {\em Appl. Appl. Math.}, {\bf14} no. 2 (2019), 699--715.

\bibitem{Irmak} N.~Irmak and M.~Alp,
Tribonacci numbers with indices in arithmetic progression and their sums, {\em Miskolc Math. Notes},  {\bf14} no. 1 (2013), 125--133.
	
\bibitem{Kilic} E.~Kili\c{c},
Tribonacci sequences with certain indices and their sums, {\em Ars Combin.},  {\bf86} (2008), 13--22.

\bibitem{Kuha} K.~Kuhapatanakul and L.~Sukruan,
The generalized Tribonacci numbers with negative subscripts, {\em Integers},  {\bf14} (2014), \#A32.
	
\bibitem{Merca}	M.~Merca,
A note on the determinant of a Toeplitz-Hessenberg matrix, {\em Special Matrices},  {\bf1} (2013), 10--16.

\bibitem{Muir} T.~Muir,
{\em The Theory of Determinants in the Historical Order of Development}, Vol. 3, Dover Publications, New York, 1960.

\bibitem{Sloane} 	N.~J.~A.~Sloane, editor, \emph{The On-Line Encyclopedia of Integer Sequences}, published electronically at {https://oeis.org}, 2010.
	
\bibitem{Yilmaz} N.~Yilmaz and N.~Taskara,
Tribonacci and Tribonacci-Lucas numbers via the determinants of special matrices,
{\em Appl. Math. Sci.},  {\bf8} no. 39 (2014), 1947--1955.
	
\bibitem{Zatorsky} R.~Zatorsky and T.~Goy,
Parapermanents of triangular matrices and some general theorems on number sequences,
{\em J. Integer Seq.},  {\bf19} (2016), Art. 16.2.2.





\end{thebibliography}
\end{document}